\title{Splitting the K-Terminal Reliability}
\author{Frank Simon\\ Email: {\tt simon@hs-mittweida}}
\affil{
Faculty Mathematics / Sciences / Computer Science\\ 
University Mittweida, Mittweida, Germany}
\newtheorem{Theorem}{Theorem}
\newtheorem{Lemma}[Theorem]{Lemma}
\newtheorem{Proposition}[Theorem]{Proposition}
\newtheorem{Corollary}[Theorem]{Corollary}
\theoremstyle{definition}
\newtheorem{Definition}[Theorem]{Definition}
\newtheorem{Example}[Theorem]{Example}
\newtheorem{Remark}[Theorem]{Remark}
\begin{document}
\maketitle
\thispagestyle{empty}
\begin{abstract}
Let $G=(V,E)$ be a graph and $K\subseteq V$ a set of terminal vertices.
Assume now that the edges of $G$ are failing independently with given
probabilities. The $K$-terminal reliability $R(G,K)$ is the probability that
all vertices in $K$ are mutually connected.

In this article we propose an efficient splitting formula for $R(G,K)$
at a separating vertex set of $G$ by lattice theoretic methods. \\[1ex]
\noindent
{\bf Keywords:} $K$-terminal reliability, Möbius inversion, partition lattice, join matrices, splitting 
\end{abstract}
\clearpage
\newcommand{\0}{\mathrm{0}}
\newcommand{\1}{\mathrm{1}}

\newcommand{\Real}{\mathds{R}}
\newcommand{\mon}{{\text{{\rm mon}}}}
\newcommand{\pr}{\mathrm{Pr}}

\newcommand{\F}{\mathbf{F}}
\newcommand{\E}{\mathbf{E}}
\newcommand{\G}{\mathbf{G}}
\newcommand{\M}{\mathbf{M}}

\newcommand{\Z}{\mathbf{Z}}
\newcommand{\T}{\mathbf{\Lambda}}
\newcommand{\rvec}{\mathbf{r}}
\newcommand{\pvec}{\mathbf{p}}

\newcommand{\fvec}{\mathbf{f}}
\newcommand{\gvec}{\mathbf{g}}

\section{Introduction}
Let $G=(V,E)$ be a graph and $K\subseteq V$ a set of terminal vertices. Assume now
that the edges of $G$ are failing independently with given probabilities. The $K$-terminal
reliability $R(G,K)$ is the probability that all terminal vertices are mutually connected in $G$.

Ball~\cite{Ball1980} shows that the computational complexity of $R(G,K)$ 
is $NP$-hard for arbitrary graphs. In the case of series parallel graphs Wood~\cite{Wood1985} proposed 
a polynomial time algorithm for the computation of $R(G,K)$ by polygon-to-chain reductions. 

A decomposition $(G^1,G^2,X)$ of $G$ consists of two subgraphs $G^1$ and $G^2$, so that
$G^1\cup G^2=G$ and $G^1\cap G^2=(X,\emptyset)$. Note that $X$ is a separating vertex set of $G$. 
In this article we propose a scheme for the computation of $R(G,K)$ given a decomposition, pursuing
the ideas of Rosenthal~\cite{Rosenthal1977}.

Bienstock~\cite{Bienstock1986} and Tittmann~\cite{Tittmann1999} 
examine such decomposition methods by utilising the lattice of set partitions of $X$. 
Nice results are especially derived when $K=V$ is assumed, but are unsatisfactory in the general case.

The centrepiece of this article is Theorem~\ref{thm:SymmetricSplittingFormula} representing 
$R(G,K)$ by the linear combination
\begin{align}\label{eqn:lk}
R(G,K)&=\sum_{\pi,\sigma\in \Pi_l(X,\pi_X)_0} R(G^1_\sigma, K_\sigma^1)f(\sigma,\pi)R(G^2_\pi,K_\pi^2),
\end{align}
where $G^1_\pi$ and $G^2_\sigma$ are emerging from the subgraphs $G^1$ and $G^2$ by a identification 
of vertices. Our result is therefore a generalisation of the result given by Bienstock~\cite{Bienstock1988}.

We emphasise that there are two main advantages of our approach compared to previously proposed methods for
the general $K$-terminal reliability by Rosenthal~\cite{Rosenthal1977} and Bienstock~\cite{Bienstock1986}.
  
The first advantage is the small cardinality of the state set $\Pi_l(X,\pi_X)_0$ in Equation~\ref{eqn:lk}. 
We show that if $X$ is a vertex separator of cardinality $n$, the state set can have at most $B(n+1)-1$ 
different elements. Here $B(n)$ is the $n$-th Bell number denoting the number of set partitions of an $n$-element set. 
We mention that the number of possible states might be even more reduced, if the separating vertex set 
contains terminal vertices.
Hence we are able to compute $R(G,K)$ even in the case of 
separating vertex sets, that were not accessible by former methods.

The second advantage of the new decomposition formula is the neat symmetry in its representation, which allows
a recursive application by the transfer-matrix method, which is not presented here for the sake of brevity.
         
\section{Partially Ordered Sets}
This section compiles some necessary definitions concerning partially ordered sets or short posets.
\begin{Definition}
Denote by $(P,\le_P)$ a poset of a finite set $P$. If $P$ has a maximum or minimum 
then it is denoted by $\1$ or $\0$, respectively. Given two elements $x,y\in P$, 
then $[x,y]_P:=\{z\in P:x\le z\le y\}$ is an \emph{interval} in $P$. Given any subset $Q\subseteq P$,
we say that $(Q,\le_Q)$ is a \emph{subposet} of $P$ if for all $p,q\in Q$ we have $p\le_Q q$ if and only
if $p\le_P q$. Observe that every interval $I$ of a poset $P$ is a subposet of $P$.
\end{Definition}
\begin{Definition}
Let $(P,\le_P)$ and $(Q,\le_Q)$ be two posets. 
The \emph{product order} $(P\times Q, \le_{P\times Q})$ consists of all ordered pairs 
in $P\times Q$, where $(p,q)\le_{P\times Q}(r,s)$ if and only if $p\le_P r$ and $q\le_Q s$. 
\end{Definition}

\begin{Definition}
Let $(P,\le_P)$ be a poset and $p,q\in P$. We say that $u\in P$ is an \emph{upper bound} of 
$p$ and $q$ if $p\le u$ and $q\le u$ and if every other upper bound $s\in P$ of $p$ and $q$ satisfies 
$u\le s$, we say that $u$ is the \emph{smallest upper bound} $u=p\vee q$ of $p$ and $q$. The notion
of \emph{lower bound} and the \emph{greatest lower bound} $p\wedge q$ of $p$ and $q$ is defined likewise.
\end{Definition}

\begin{Definition}
A \emph{lattice $(L,\vee,\wedge)$} is a poset $(L,\le)$, so that for all $p,q\in L$ the elements $p\vee q$ and 
$p\wedge q$ exist. In the case that we only demand that $p\vee q$ exists for all $p,q\in L$ we say that
$(L,\vee)$ is an \emph{upper semilattice}.
\end{Definition}

\begin{Definition}
Let $(P,\le_P)$ and $(Q,\le_Q)$ be two posets. 
A function $f\colon P\rightarrow Q$ is \emph{order preserving} if for all $p,q\in P$ 
we have $f(p)\le_Q f(q)$ if and only if $p\le_P q$. 
The posets $P$ and $Q$ are \emph{isomorphic}, if there is an
order preserving and bijective function $f\colon P\rightarrow Q$, and we write $P\simeq Q$.
\end{Definition}

\section{The Incidence Algebra\label{sec:incidenceAlgebra}}
This section states some of the definitions and results concerning
incidence algebras of posets.
Rota~\cite{Rota1964a} applies the incidence algebra of posets in combinatorics and Crapo~\cite{Crapo1966} 
contributes the versatile Theorem~\ref{thm:crapo}. Finally, we mention that Aigner~\cite{Aigner1997} 
gives a compilation of results, that are utilising incidence algebras in enumerative combinatorics. 

\begin{Definition}
Let $(P,\le)$ be a poset. We denote by $I(P)$ the set of all 
functions $f\colon P\times P\rightarrow \Real$ with $f(x,y)=0$, whenever $x\nleq y$ holds. 
For every $f,g\in I(P)$ define the \emph{convolution product} $f\star g\in I(P)$ by
\begin{align}\label{eqn:convolutionProduct}
(f\star g)(x,y)&=\sum_{x\le z\le y} f(x,z)g(z,y).
\end{align}
The set $I(P)$ endowed with the pointwise addition, multiplication with scalars $\lambda\in\Real$,
and the convolution product is the \emph{incidence algebra $I(P)$ of $P$}.
\end{Definition}

\begin{Definition}
Let $P$ be a poset and $x,y\in P$. The incidence functions 
\begin{align}
\zeta_P(x,y)=
\begin{cases}
1 & x\le y \\
0 & \text{else}              
\end{cases}
\quad \text{and} \quad
\delta_P(x,y)=
\begin{cases}
1 & x=y \\
0 & \text{else}
\end{cases}
\end{align}
are the \emph{Zeta-function} and the \emph{Delta-function} of $P$.
\end{Definition}

\begin{Definition}\label{def:moebFunc}
Let $P$ be a poset. The unique incidence function $\mu_P\in{}I(P)$, that satisfies the equation
$\mu_P\star \zeta_P=\delta_P$, is the \emph{Möbius function of $P$}.
\end{Definition}

\begin{Proposition}[Rota~\cite{Rota1964a}]\label{prop:productmoebius}
Let $(P,\le_P)$ and $(Q,\le_Q)$ be two posets. The Möbius function of the product order 
$(P\times Q, \le_{P\times Q})$ satisfies
\begin{align}
\mu_{P\times Q}((p,q),(r,s))&=\mu_P(p,r)\mu_Q(q,s)
\end{align}
for all $(p,q), (r,s)\in P\times Q$.
\end{Proposition}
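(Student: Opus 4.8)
The plan is to exploit the uniqueness clause in Definition~\ref{def:moebFunc}: since $\mu_{P\times Q}$ is the \emph{only} incidence function on $P\times Q$ satisfying $\mu_{P\times Q}\star\zeta_{P\times Q}=\delta_{P\times Q}$, it suffices to exhibit \emph{some} function that satisfies this convolution equation and then invoke uniqueness. I would therefore define the candidate $\nu\in I(P\times Q)$ by $\nu((p,q),(r,s)):=\mu_P(p,r)\mu_Q(q,s)$ and verify that $\nu\star\zeta_{P\times Q}=\delta_{P\times Q}$. First I would check that $\nu$ genuinely lies in $I(P\times Q)$: if $(p,q)\nleq_{P\times Q}(r,s)$, then $p\nleq_P r$ or $q\nleq_Q s$, so one of the two Möbius factors vanishes by definition of the incidence algebra, giving $\nu((p,q),(r,s))=0$ as required.

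The two key structural observations are that both the Zeta- and the Delta-function of the product poset factor over the components. Indeed, $(p,q)\le_{P\times Q}(r,s)$ holds precisely when $p\le_P r$ and $q\le_Q s$, so that
\begin{align}
\zeta_{P\times Q}((p,q),(r,s))&=\zeta_P(p,r)\,\zeta_Q(q,s),\\
\delta_{P\times Q}((p,q),(r,s))&=\delta_P(p,r)\,\delta_Q(q,s).
\end{align}
Likewise, the interval in the product poset decomposes as $[(p,q),(r,s)]_{P\times Q}=[p,r]_P\times[q,s]_Q$, so that an element $(a,b)$ ranges over this interval exactly when $a$ ranges over $[p,r]_P$ and $b$ ranges over $[q,s]_Q$ independently.

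With these facts in hand, the main computation is to expand the convolution and split the single summation over the product interval into a double summation, which then factors as a product of two sums:
\begin{align}
(\nu\star\zeta_{P\times Q})((p,q),(r,s))
&=\sum_{(p,q)\le(a,b)\le(r,s)}\mu_P(p,a)\mu_Q(q,b)\,\zeta_P(a,r)\,\zeta_Q(b,s)\\
&=\Bigl(\sum_{p\le a\le r}\mu_P(p,a)\zeta_P(a,r)\Bigr)\Bigl(\sum_{q\le b\le s}\mu_Q(q,b)\zeta_Q(b,s)\Bigr)\\
&=(\mu_P\star\zeta_P)(p,r)\,(\mu_Q\star\zeta_Q)(q,s)=\delta_P(p,r)\,\delta_Q(q,s).
\end{align}
By the factorisation of the product Delta-function, this last expression equals $\delta_{P\times Q}((p,q),(r,s))$, so $\nu$ satisfies the defining equation and uniqueness forces $\nu=\mu_{P\times Q}$.

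I expect the only delicate point to be justifying the passage from the single sum over $[(p,q),(r,s)]_{P\times Q}$ to the double sum over the two factor intervals; this rests entirely on the interval decomposition above, which is in turn immediate from the definition of the product order. Everything else is a routine distributivity manipulation, and no appeal to finiteness beyond the well-definedness of the convolution sums is needed.
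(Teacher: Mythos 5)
Your proof is correct. The paper itself offers no proof of Proposition~\ref{prop:productmoebius} — it is stated as a cited result of Rota — so there is no in-paper argument to compare against; your route is precisely the standard one for this fact: exhibit the factored candidate $\nu((p,q),(r,s))=\mu_P(p,r)\mu_Q(q,s)$, verify $\nu\star\zeta_{P\times Q}=\delta_{P\times Q}$, and invoke the uniqueness clause of Definition~\ref{def:moebFunc}. Every step checks out: membership of $\nu$ in $I(P\times Q)$, the factorisations of $\zeta_{P\times Q}$ and $\delta_{P\times Q}$, and the interval decomposition $[(p,q),(r,s)]_{P\times Q}=[p,r]_P\times[q,s]_Q$ all follow directly from the definition of the product order, and since the paper works with finite posets the convolution sums are trivially well-defined, so the splitting of the single sum into a product of two sums is just finite distributivity.
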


\begin{Definition}
Let $L$ be a lattice with minimum $\0$ and maximum $\1$. $L$ is \emph{complemented} if
for all $p\in L$ there is a $q\in L$ with $p\vee q=\1$ and $p\wedge q=\0$.
\end{Definition}

\begin{Theorem}[Crapo \cite{Crapo1966}]\label{thm:crapo}
Let $L$ be a finite lattice, that is not complemented. Then $\mu_L(\0,\1)=0$.
\end{Theorem}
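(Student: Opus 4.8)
The plan is to obtain the statement as a special case of Crapo's \emph{complementation theorem}, which for an arbitrary element $a\in L$ expresses $\mu_L(\0,\1)$ through the complements of $a$. Writing $C(a)=\{c\in L:c\wedge a=\0 \text{ and } c\vee a=\1\}$ for the set of complements of $a$, the identity I would first prove is
\begin{align*}
\mu_L(\0,\1)=\sum_{\substack{x,y\in C(a)\\ x\le y}}\mu_L(\0,x)\,\mu_L(y,\1),
\end{align*}
equivalently the $\zeta_L$-weighted form $\sum_{x,y}\mu_L(\0,x)\,\zeta_L(x,y)\,\mu_L(y,\1)$ taken over all $x$ with $x\vee a=\1$ and all $y$ with $y\wedge a=\0$, since $x\le y$ together with these two conditions forces $x\wedge a\le y\wedge a=\0$ and $y\vee a\ge x\vee a=\1$, hence $x,y\in C(a)$. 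Granting this, the theorem is immediate: as $L$ is not complemented there is an $a\in L$ with $C(a)=\emptyset$, and because $\0$ and $\1$ are complements of one another we have $a\neq\0,\1$; the indexing set of the sum is then empty and $\mu_L(\0,\1)=0$.

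It remains to establish the complementation identity, which I would carry out inside the Möbius algebra $A$ of $L$: the $\Real$-algebra with basis $\{t:t\in L\}$, product $s\cdot t=s\wedge t$, and identity $\1$. Its primitive idempotents $\sigma_x=\sum_{s\le x}\mu_L(s,x)\,s$ are orthogonal, $\sigma_x\,\sigma_y=\delta_L(x,y)\,\sigma_x$, satisfy $\sum_{x\in L}\sigma_x=\1$, and expand each lattice element as $t=\sum_{s\le t}\sigma_s$; in particular $\sigma_\1=\sum_{s\in L}\mu_L(s,\1)\,s$, so that $\mu_L(\0,\1)$ is exactly the coefficient of the basis vector $\0$ in $\sigma_\1$. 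The two defining conditions of a complement would then be fed into this calculus from opposite sides: the meet condition $c\wedge a=\0$ is controlled by the idempotent expansion $a=\sum_{s\le a}\sigma_s$ and the orthogonality relations, while the join condition $c\vee a=\1$ is governed by Weisner's theorem, the identity $\sum_{x\vee a=\1}\mu_L(\0,x)=0$ valid for every $a>\0$, which itself follows from $\mu_L\star\zeta_L=\delta_L$ by a one-line Möbius inversion over the level sets of $x\mapsto x\vee a$.

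The main obstacle is the simultaneous handling of these two conditions: a complement is pinned down by a meet \emph{and} a join, so neither the meet-algebra alone nor Weisner's theorem alone suffices, and this is exactly the point at which the full lattice structure---rather than the poset structure used in the rest of the paper---is indispensable. Concretely, I would extract the $\0$-coefficient of $\sigma_\1$ by inserting the decomposition $\1=\sum_x\sigma_x$ together with the expansion of $a$, collapse the resulting double sum with orthogonality, and then match the surviving terms against the pairs $x\le y$ with $x\vee a=\1$ and $y\wedge a=\0$. The delicate bookkeeping lies in keeping the meet-side and join-side summations aligned through the ensuing double Möbius inversion; once this is done the complementation identity closes, and with it the theorem.
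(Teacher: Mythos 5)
First, a point of calibration: the paper contains no proof of Theorem~\ref{thm:crapo} at all --- it is imported from Crapo's 1966 article as a black box and used exactly once, in the proof of Lemma~\ref{lem:unevenlyLabelled} --- so there is no in-paper argument to compare yours against; it has to stand on its own. Its outer layer does stand: deriving the statement from Crapo's complementation theorem is the standard route, your verification that the pairs $x\le y$ with $x\vee a=\1$ and $y\wedge a=\0$ are exactly the pairs of complements $x\le y$ is correct, and once some $a$ has $C(a)=\emptyset$ the empty sum indeed forces $\mu_L(\0,\1)=0$.

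The gap is that this reduction is the trivial part, and the part carrying all the mathematical content --- the complementation identity itself --- is never proved; moreover, your sketch of it would not close as described. The recipe ``extract the $\0$-coefficient of $\sigma_\1$ by inserting $\1=\sum_x\sigma_x$ and the expansion of $a$, then collapse with orthogonality'' cannot work on its own, because orthogonality in the meet-algebra only registers meet-conditions; the join-condition $x\vee a=\1$ is invisible to it. To see where the real work sits, expand $E=\sum_{y\wedge a=\0}\mu_L(y,\1)\,y$ in the idempotent basis: the coefficient of $\sigma_s$ is $c(s)=\sum_{y\ge s,\,y\wedge a=\0}\mu_L(y,\1)$, and the crux of the whole theorem is showing $c(s)=0$ whenever $s\vee a<\1$; granting that, comparing $\0$-coefficients of $E$ yields the identity at once. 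Your tools do not reach this: dual Weisner inside the interval $[s,\1]$, applied to the element $s\vee a$, controls the sum over $\{y\ge s:\,y\wedge(s\vee a)=s\}$, which is in general strictly smaller than $\{y\ge s:\,y\wedge a=\0\}$ --- in the pentagon $N_5$ (elements $\0<b<c<\1$ with $a$ incomparable to $b,c$, and $s=b$) the two sums are $0$ and $-1$ respectively, so the naive interval argument computes the wrong quantity. The step that actually closes the proof is a relative Weisner computation absent from your proposal: set $g(w)=\sum_{y\ge s,\,y\wedge a=w}\mu_L(y,\1)$ for $w\le a$, observe that $\sum_{v\le w\le a}g(w)=\sum_{y\ge s\vee v}\mu_L(y,\1)=\delta_L(s\vee v,\1)$ for every $v\le a$, and Möbius-invert over the interval $[\0,a]$ to obtain $c(s)=g(\0)=\sum_{v\le a,\,s\vee v=\1}\mu_L(\0,v)$, which visibly vanishes when $s\vee a<\1$. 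Since you yourself defer exactly this ``delicate bookkeeping'', the proposal as written establishes only an easy corollary of a theorem it assumes.
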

     
\section{Labelled Set Partitions\label{sec:labelledSetPartitions}}
In this section we introduce the lattice of labelled set partitions $\Pi_l(X)$ of a finite set $X$ 
and determine its Möbius function. The study of this lattice is helpful when considering the 
splitting of the $K$-terminal reliability. First approaches in this direction
are made by Bienstock~\cite{Bienstock1986} and Tittmann~\cite{Tittmann1990}.
\begin{Definition}
Let $X$ be a finite set. A \emph{set partition} $\pi=\{B_1,\ldots, B_k\}$ of $X$
is a collection of mutually disjoint and non-empty subsets of $X$, the \emph{blocks},
 with union $X$. The set of all set partitions of $X$ is denoted by $\Pi(X)$.

We define the poset $(\Pi(X),\le)$ by setting $\sigma\le\pi$ 
if every block of $\sigma$ is a subset of a block in $\pi$ for all $\sigma\in\pi\in\Pi(X)$.
Note that $(\Pi(X),\le)$ is 
a lattice with minimum $\hat{0}$ and maximum $\hat{1}$.

Finally, we mention that the number of all set partitions of an $n$-element set are 
the \emph{Bell numbers $B(n)$} and the number of all set partitions of an $n$-element set with
$k$ blocks the \emph{Stirling numbers of the second kind $S(n,k)$}. 
\end{Definition}

\begin{Theorem}[Rota~\cite{Rota1964a}]
\label{thm:moebiusPartitionLattice}
Let $X$ be a non-empty $n$-element set. Then the Möbius function in $\Pi(X)$ 
satisfies
\begin{align}
\mu_{\Pi(X)}(\hat{0},\hat{1})&=(-1)^{n-1}(n-1)!.
\end{align}
\end{Theorem}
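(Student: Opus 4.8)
The plan is to determine $\mu_{\Pi(X)}(\hat{0},\hat{1})$ from the two tools already available: the defining relation $\mu_{\Pi(X)}\star\zeta_{\Pi(X)}=\delta_{\Pi(X)}$ of Definition~\ref{def:moebFunc} and the product formula of Proposition~\ref{prop:productmoebius}. Since the Möbius function is invariant under poset isomorphism and $\Pi(X)$ depends on $X$ only through its cardinality, I would abbreviate $w_n:=\mu_{\Pi(X)}(\hat{0},\hat{1})$ whenever $|X|=n$, with $w_1=1$ because a one-element set has a single partition, so its lattice is a point. The structural observation driving everything is that for a partition $\pi=\{B_1,\dots,B_k\}$ the lower interval $[\hat{0},\pi]$ consists exactly of the partitions refining $\pi$, which may be specified block by block; hence
\begin{align}
[\hat{0},\pi]\;\simeq\;\prod_{i=1}^k \Pi(B_i).
\end{align}
Applying Proposition~\ref{prop:productmoebius} across this product (and its iteration) yields the factorisation $\mu_{\Pi(X)}(\hat{0},\pi)=\prod_{i=1}^k w_{|B_i|}$.

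With this in hand I would evaluate the defining relation at $(\hat{0},\hat{1})$. Because $\zeta_{\Pi(X)}(\pi,\hat{1})=1$ for every $\pi$, the convolution in Equation~\ref{eqn:convolutionProduct} collapses to $\sum_{\pi\in\Pi(X)}\mu_{\Pi(X)}(\hat{0},\pi)=\delta_{\Pi(X)}(\hat{0},\hat{1})$. For $n\ge 2$ we have $\hat{0}\neq\hat{1}$, so the right-hand side vanishes and the factorisation turns this into the single recursion
\begin{align}
\sum_{\pi\in\Pi(X)}\;\prod_{B\in\pi} w_{|B|}\;=\;0,
\end{align}
whereas the same sum equals $1$ for $n=1$ (and, under the empty-partition convention, for $n=0$). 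Separating the top element $\hat{1}$, whose single block of size $n$ contributes $w_n$, rewrites the identity as $w_n=-\sum_{\pi\neq\hat{1}}\prod_{B\in\pi}w_{|B|}$, an expression in which every block has size strictly less than $n$; this is exactly the shape needed for a strong induction on $n$.

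To close the recursion cleanly I would pass to the exponential generating function $W(x)=\sum_{n\ge 1}w_n\,x^n/n!$. Sorting the set partitions of an $n$-set by their block structure identifies the inner sums above as the coefficients of $\exp\bigl(W(x)\bigr)$, by the exponential formula for set partitions. The recursion says these coefficients vanish except at $n=0,1$, where they equal $1$, so $\exp(W(x))=1+x$, whence $W(x)=\log(1+x)=\sum_{n\ge 1}(-1)^{n-1}x^n/n$. Comparing coefficients gives $w_n/n!=(-1)^{n-1}/n$, that is $w_n=(-1)^{n-1}(n-1)!$, as claimed. The one genuinely load-bearing step is this last passage: one must justify that summing $\prod_{B}w_{|B|}$ over all partitions exponentiates the generating function, or equivalently carry out the Stirling-number bookkeeping that a direct inductive verification of $(-1)^{n-1}(n-1)!$ would demand. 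The interval factorisation, by contrast, is routine once the block-by-block refinement picture is made precise.
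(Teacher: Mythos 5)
The first thing to say is that the paper contains no proof of this statement: it is quoted as a known theorem of Rota~\cite{Rota1964a}, so there is no internal argument to compare yours against. Judged on its own, your proof is correct. The lower-interval factorisation $[\hat{0},\pi]\simeq\prod_{i=1}^{k}\Pi(B_i)$ is the right structural fact; iterating Proposition~\ref{prop:productmoebius} across it (together with the tacit, standard fact that $\mu_P(x,y)$ depends only on the interval $[x,y]$ as a poset, which the paper also uses freely, e.g.\ in Example~\ref{example:productOrder}) gives $\mu_{\Pi(X)}(\hat{0},\pi)=\prod_{B\in\pi}w_{|B|}$; the convolution identity correctly collapses to $\sum_{\pi\in\Pi(X)}\prod_{B\in\pi}w_{|B|}=0$ for $n\ge 2$; and the exponential formula then forces $\exp(W(x))=1+x$, hence $w_n=(-1)^{n-1}(n-1)!$. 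Your one external dependency is the exponential formula, which you rightly flag as the load-bearing step; if you want to avoid it, the strong induction you set up closes with a purely combinatorial identity: after substituting the inductive hypothesis, the step reduces to $\sum_{\pi\in\Pi(X)}\prod_{B\in\pi}(-1)^{|B|-1}(|B|-1)!=0$ for $n\ge 2$, and since $\prod_{B\in\pi}(|B|-1)!$ counts the permutations of $X$ whose cycles have exactly the blocks of $\pi$ as supports, while $\prod_{B\in\pi}(-1)^{|B|-1}=(-1)^{n-k}$ (with $k$ the number of blocks) is the sign of any such permutation, the whole sum equals $\sum_{\sigma\in S_n}\mathrm{sgn}(\sigma)=0$. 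Finally, note that your strategy mirrors the proof the paper does give for the labelled analogue, Theorem~\ref{thm:keyTheoremMoebiusFunction}: there the identity $\sum_{\pi}\mu_{\Pi_l(X)}(\pi,\hat{1}_l)=0$ over upper intervals is reduced by interval isomorphisms to a Stirling-number recursion and closed by induction (using the present theorem as input), whereas you run the same scheme on lower intervals $[\hat{0},\pi]$ and close it with generating functions.
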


\begin{Definition}
Let $X$ be a finite set and $l\not\in X$ a distinguished label element.
A {\em labelled set partition} $\pi$ is a collection of subsets
$\pi=\{B_1\cup L_1,\ldots, B_k\cup L_k\}$ with
$B_i\subseteq X$ and $L_i\in\{\emptyset,\{l\}\}$, so that $\{B_1,\ldots, B_k\}$ is a set
partition of $X$. The set of all labelled set partitions is denoted by
$\Pi_l(X)$.  A block $B_i\cup L_i\in \pi$ is {\em unlabelled} if $L_i=\emptyset$ 
and \emph{labelled} if $L_i=\{l\}$. 

For convenience of display we use the notation $\pi=B_1L_1|\ldots|B_kL_k$ and we drop
all unnecessary parentheses. For example we write $\pi=12l|3|45$ instead of
$\pi=\{\{1,2,l\},\{3\},\{4,5\}\}$.
\end{Definition}

\begin{Definition}
Let $\pi\in\Pi_l(X)$ with $\pi=B_1L_1|\ldots|B_kL_k$ and $Y\subseteq X$. 
We say that $\pi\sqcap Y\in\Pi_l(X)$ is the \emph{restriction of $\pi$ to $Y$} with
\begin{align}
\pi\sqcap Y&=\bigcup_{
\substack{
B_iL_i \in\pi \\
B_i\cap Y\ne\emptyset 
}
} \{(B_i\cap Y) \cup L_i)\}.
\end{align}
\end{Definition}

\begin{Definition}
Let $\sigma,\pi\in\Pi_l(X)$ and set
$\sigma\le\pi$ if every block of $\sigma$ is a subset of a block in
$\pi$. Observe that $(\Pi_l(X),\le)$ is a poset with minimum $\hat{0}_l$ and maximum $\hat{1}_l$.
It can be shown that $(\Pi_l(X),\le)$ is even more a lattice $(\Pi_l(X),\vee,\wedge)$.
\end{Definition}

\begin{Proposition}
\label{prop:product}
Let $\sigma,\pi\in\Pi_l(X)$ with $\sigma\le\pi$ and $\pi=B_1L_1|\ldots|B_kL_k$. 
Then the interval $[\sigma,\pi]_{\Pi_l(X)}$ is isomorphic to the $k$-fold product order
\begin{align}
\prod_{i=1}^k [\sigma\sqcap B_i,B_iL_i]_{\Pi_l(B_i)}.
\end{align}
\end{Proposition}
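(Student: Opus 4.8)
The plan is to exhibit an explicit order isomorphism $\Phi$ between the interval $[\sigma,\pi]_{\Pi_l(X)}$ and the product poset $\prod_{i=1}^k [\sigma\sqcap B_i, B_iL_i]_{\Pi_l(B_i)}$, realised by the restriction map
\[
\Phi(\tau) := (\tau\sqcap B_1,\ldots,\tau\sqcap B_k).
\]
The structural fact driving everything is that any $\tau$ with $\tau\le\pi$ refines $\pi$, so each block of $\tau$ lies inside exactly one block $B_iL_i$ of $\pi$. I would first record this observation, noting in particular that a single block of $\tau$ cannot straddle two blocks $B_i,B_j$ of $\pi$, since the $B_i$ are pairwise disjoint and blocks are non-empty. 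Consequently the blocks of $\tau$ split according to which $B_i$ they occupy, and $\tau$ is recovered as the disjoint union of its restrictions $\tau\sqcap B_i$.

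Second, I would check that $\Phi$ is well-defined and bijective. Because restriction preserves block-wise containment, the chain $\sigma\le\tau\le\pi$ restricts on each $B_i$ to $\sigma\sqcap B_i\le\tau\sqcap B_i\le\pi\sqcap B_i=B_iL_i$, so every coordinate lands in the prescribed interval; here $\pi\sqcap B_i=B_iL_i$ precisely because only the block $B_iL_i$ of $\pi$ meets $B_i$, and the same computation gives $\sigma\sqcap B_i\le B_iL_i$, so each interval is non-empty. The inverse map reassembles a tuple $(\tau_1,\ldots,\tau_k)$ into the labelled partition $\tau$ whose blocks are all the blocks of all the $\tau_i$; disjointness of the $B_i$ guarantees this is a genuine labelled set partition of $X$, and the endpoint conditions $\sigma\sqcap B_i\le\tau_i\le B_iL_i$ translate back into $\sigma\le\tau\le\pi$. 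That the two maps are mutually inverse is exactly the structural fact above: restriction followed by reassembly returns $\tau$, and reassembly followed by restriction returns the tuple.

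Third, I would verify that $\tau\le\tau'$ holds if and only if $\Phi(\tau)\le\Phi(\tau')$ coordinatewise, which together with bijectivity yields the isomorphism. If $\tau\le\tau'$, then each block of $\tau\sqcap B_i$ sits inside a block of $\tau'$ that---being forced to lie in $B_i$ by the same disjointness argument---is a block of $\tau'\sqcap B_i$, giving $\tau\sqcap B_i\le\tau'\sqcap B_i$; the converse is immediate from reassembly. The main obstacle, and the only genuinely delicate bookkeeping, is tracking the labels $L_i\in\{\emptyset,\{l\}\}$ throughout: one must confirm that restriction transports each block's label unchanged, so that the label order ($\emptyset\subseteq\{l\}$) is respected under $\Phi$ and its inverse, that the upper endpoint $B_iL_i$ carries exactly the label of the ambient block of $\pi$, and that reassembly never silently merges two fragments or alters a flag. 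Once the label semantics are pinned down, the remaining partition-theoretic content is the standard decomposition of an interval below a fixed coarsening into the product of the blockwise intervals.
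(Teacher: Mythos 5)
Your proof is correct: the restriction map $\Phi(\tau)=(\tau\sqcap B_1,\ldots,\tau\sqcap B_k)$ with reassembly (disjoint union of blocks) as inverse is the canonical isomorphism, and you correctly identify the two facts that make it work, namely that $\tau\le\pi$ forces every block of $\tau$ to lie in exactly one $B_i\cup L_i$, and that the restriction operator $\sqcap$ carries labels along unchanged so that blockwise containment (which in $\Pi_l$ includes the label sets) is preserved in both directions. Note that the paper states Proposition~\ref{prop:product} without any proof at all, so there is nothing to compare against; your argument supplies exactly the justification the paper leaves implicit.
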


\begin{Lemma}
\label{lem:unevenlyLabelled}
Let $\pi\in\Pi_l(X)$ be a labelled set partition with at least one labelled and 
at least one unlabelled block. Then $\mu_{\Pi_l(X)}(\pi, \hat{1}_l)=0$.
\end{Lemma}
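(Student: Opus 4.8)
The plan is to express $\mu_{\Pi_l(X)}(\pi,\hat{1}_l)$ through the interval $L:=[\pi,\hat{1}_l]_{\Pi_l(X)}$ and then to apply Crapo's Theorem~\ref{thm:crapo}. Since the recursion defining $\mu_{\Pi_l(X)}(\pi,\,\cdot\,)$ only refers to elements lying between $\pi$ and its second argument, the value $\mu_{\Pi_l(X)}(\pi,\hat{1}_l)$ coincides with the Möbius number $\mu_L(\pi,\hat{1}_l)$ computed inside $L$; moreover $L$, being an interval of the lattice $\Pi_l(X)$, is itself a finite lattice with minimum $\pi$ and maximum $\hat{1}_l$. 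It therefore suffices to prove that $L$ is \emph{not} complemented, for then Crapo's Theorem~\ref{thm:crapo} forces $\mu_L(\pi,\hat{1}_l)=0$.

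First I would describe the coatoms of $L$, that is, the elements covered by $\hat{1}_l$, and show that each of them has all of its blocks labelled. The only one-block element contained in $L$ is $\hat{1}_l=X\cup\{l\}$ itself: the unlabelled one-block partition $X$ does not satisfy $\pi\le X$, because by hypothesis $\pi$ possesses a labelled block, and a labelled block can never be a subset of the unlabelled block $X$. Consequently every coatom $c$ of $L$ has at least two blocks. If some block of $c$ were unlabelled, then relabelling it would yield an element of $L$ strictly above $c$ that still has at least two blocks and hence is strictly below $\hat{1}_l$, contradicting that $c$ is a coatom. Thus every coatom of $L$ is fully labelled.

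Next, using the hypothesis that $\pi$ has an unlabelled block, I would pick such a block $U$ and form the element $a\in L$ obtained from $\pi$ by turning $U$ into the labelled block $U\cup\{l\}$ and leaving every other block of $\pi$ untouched, so that $\pi<a<\hat{1}_l$. The crucial property is that $a\le c$ holds for \emph{every} coatom $c$ of $L$: the set partitions underlying $a$ and $\pi$ coincide, so each block of $a$ is contained, as a set, in a block of $c$, and since $c$ is fully labelled the labels impose no further restriction.

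Finally I would rule out a complement for $a$. Assume $a'\in L$ satisfies $a\vee a'=\hat{1}_l$ and $a\wedge a'=\pi$. If $a'\neq\hat{1}_l$, then $a'$ lies below some coatom $c$ of $L$; but $a\le c$ as well, whence $a\vee a'\le c<\hat{1}_l$, a contradiction. Hence $a'=\hat{1}_l$, and then $a\wedge a'=a\neq\pi$, contradicting $a\wedge a'=\pi$. Therefore $a$ has no complement, $L$ is not complemented, and Crapo's Theorem~\ref{thm:crapo} gives $\mu_{\Pi_l(X)}(\pi,\hat{1}_l)=\mu_L(\pi,\hat{1}_l)=0$. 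I expect the characterisation of the coatoms to be the main obstacle: the heart of the matter is that no coatom of $L$ may contain an unlabelled block, and it is precisely here that both hypotheses---the presence of a labelled and of an unlabelled block in $\pi$---come into play.
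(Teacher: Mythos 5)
Your proposal is correct and follows essentially the same route as the paper: both work inside the interval $[\pi,\hat{1}_l]_{\Pi_l(X)}$, exhibit the very same witness element (the partition obtained from $\pi$ by relabelling one unlabelled block) as having no complement, and conclude with Crapo's Theorem~\ref{thm:crapo}. The only divergence is internal to the non-complementation step: you justify it through the characterisation of the coatoms of the interval as fully labelled partitions, whereas the paper (after a cosmetic reduction to singleton blocks) deduces directly that $\pi'\vee\sigma=\hat{1}_l$ forces $\sigma=\hat{1}_l$; your version simply spells out that step in more detail.
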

\begin{proof}
We can assume without loss of generality that $\pi=B_1L_1|B_2L_2|\ldots|B_kL_k$ with $k\ge 2$, so that $L_1=\emptyset$ and 
$L_2=\{l\}$. Consider now the labelled set partition $\tilde\pi\in\Pi_l(\{1,\ldots, k\})$ with
$\tilde\pi=1L_1|2L_2|\ldots |kL_k$. Then we have 
\begin{align*}
[\pi,\hat{1}_l]_{\Pi_l(X)}&\simeq[\tilde\pi,\hat{1}_l']_{\Pi_l(\{1,\ldots, k\})},
\end{align*}
where $\hat{1}_l'$ denotes the maximum in $\Pi_l(\{1,\ldots, k\})$ and hence
\begin{align*}
\mu_{\Pi_l(X)}(\pi,\hat{1}_l)&=\mu_{\Pi_l(\{1,\ldots,k\})}(\tilde\pi,\hat{1}_l').
\end{align*}

Therefore we can assume without loss of generality that $\pi$ has the form $\pi=1L_1|2L_2|\ldots|kL_k$.

Define the labelled set partition $\pi'=1L_1'|2L_2|\ldots|kL_k$, where
$L_1'=L_1\cup \{l\}$ is set, and observe that $\pi'$ is
an element of the interval $I:=[\pi,\hat{1}_l]_{\Pi_l(X)}$ with $\pi'>\pi$. 

Suppose now that $\pi'$ has the complement $\sigma\in{}I$, then
$\pi'\vee\sigma=\hat{1}_l$ implies $\sigma=\hat{1}_l$.
On the other hand we find $\sigma\wedge \pi'=\pi'>\pi$, so that $\pi'$
has no complement in the interval $I$, which contradicts our assumption.
Thus we can conclude by Crapo's Theorem~\ref{thm:crapo} 
that $\mu_{\Pi_l(X)}(\pi,\hat{1}_l)=0$.
\end{proof}

\begin{Definition}\label{def:moebValues}
Let $X=\{1,\ldots, n\}$ and define
\begin{align}
\mu_n&=\mu_{\Pi_l(X)}(\hat{0}_l,\hat{1}_l) \\
\tilde\mu_n&=\mu_{\Pi_l(X)}(\sigma_n,\hat{1}_l),
\end{align}
where $\sigma_n=1l|\ldots|nl$ denotes the labelled set partition with $n$ labelled singleton blocks.
\end{Definition}

\begin{Example}\label{example:productOrder}
Let $X=\{1,2,3,4,5,6,7\}$ and $\sigma,\pi\in\Pi_l(X)$ with $\sigma=1l|2l|34|5|67$ and $\pi=12l|345l|67$. Then we find
by Propositions~\ref{prop:product} and \ref{prop:productmoebius} and Definition~\ref{def:moebValues}
\begin{align*}
[\sigma,\pi]_{\Pi_l(X)}&\simeq [1l|2l,12l]_{\Pi_l(\{1,2\})}\times[34|5,345l]_{\Pi_l(\{3,4,5\})}\times [67,67]_{\Pi_l(\{6,7\})} \\
&\simeq [1l|2l,12l]_{\Pi_l(\{1,2\})}\times [3|5,35l]_{\Pi_l(\{3,5\})} \times [6l,6l]_{\Pi_l(\{6\})}.
\end{align*}
Hence we have $\mu_{\Pi_l(X)}(\sigma,\pi)=\tilde\mu_2\mu_2\tilde\mu_1$.
\end{Example}

By Example~\ref{example:productOrder} we conclude, that we only have to consider $\mu_n$ and $\tilde\mu_n$ to compute the Möbius function
$\mu_{\Pi_l(X)}(\sigma,\pi)$ for arbitrary $\sigma,\pi\in\Pi_l(X)$.

\begin{Theorem}\label{thm:keyTheoremMoebiusFunction}
Let $X$ be a non-empty $n$-element set. Then
\begin{align}
\tilde\mu_n&=(-1)^{n-1}(n-1)!,  \\
\mu_n&=(-1)^n (n-1)!
\end{align}
for all $n\ge 1$.
\end{Theorem}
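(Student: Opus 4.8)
The plan is to treat the two formulas separately: I would derive $\tilde\mu_n$ by identifying an interval with the ordinary partition lattice, and then obtain $\mu_n$ from $\tilde\mu_n$ by a single application of the defining recursion $\mu_P\star\zeta_P=\delta_P$ together with an induction on $n$.

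First I would compute $\tilde\mu_n$. Starting from $\sigma_n=1l|\ldots|nl$, a labelled block can only ever merge into another labelled block, so every $\rho\ge\sigma_n$ has all of its blocks labelled; conversely the all-labelled partitions are precisely those lying above $\sigma_n$. Sending such a $\rho$ to its underlying set partition of $X$ is then an order isomorphism $[\sigma_n,\hat{1}_l]_{\Pi_l(X)}\simeq\Pi(X)$ that carries $\sigma_n$ to $\hat 0$ and $\hat{1}_l$ to $\hat 1$. Hence $\tilde\mu_n=\mu_{\Pi(X)}(\hat 0,\hat 1)=(-1)^{n-1}(n-1)!$ by Theorem~\ref{thm:moebiusPartitionLattice}.

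Next I would set up a product formula for $\mu_{\Pi_l(X)}(\hat{0}_l,\rho)$ for an arbitrary $\rho=B_1L_1|\ldots|B_mL_m$. By Proposition~\ref{prop:product} the interval $[\hat{0}_l,\rho]$ factors as $\prod_i[\hat{0}_l\sqcap B_i,\,B_iL_i]_{\Pi_l(B_i)}$, where $\hat{0}_l\sqcap B_i$ is the all-unlabelled singleton partition of $B_i$. For a labelled block ($L_i=\{l\}$) this factor is all of $\Pi_l(B_i)$ and contributes $\mu_{|B_i|}$; for an unlabelled block ($L_i=\emptyset$) its top is the unlabelled single block $B_i$, below which only all-unlabelled partitions of $B_i$ lie, so the factor is isomorphic to $\Pi(B_i)$ and contributes $(-1)^{|B_i|-1}(|B_i|-1)!=\tilde\mu_{|B_i|}$. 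By Proposition~\ref{prop:productmoebius} this yields
\begin{align*}
\mu_{\Pi_l(X)}(\hat{0}_l,\rho)=\prod_{i\,:\,L_i=\{l\}}\mu_{|B_i|}\cdot\prod_{i\,:\,L_i=\emptyset}\tilde\mu_{|B_i|}.
\end{align*}

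Finally I would prove $\mu_n=(-1)^n(n-1)!$ by induction on $n$. The identity $\mu_{\Pi_l(X)}\star\zeta_{\Pi_l(X)}=\delta_{\Pi_l(X)}$ gives $\sum_{\rho\in\Pi_l(X)}\mu_{\Pi_l(X)}(\hat{0}_l,\rho)=0$ for $n\ge 1$. Grouping the summands by their underlying set partition $\bar\rho\in\Pi(X)$ and summing over the independent label choices of its blocks, the product formula collapses each group to $\prod_{B\in\bar\rho}\bigl(\mu_{|B|}+\tilde\mu_{|B|}\bigr)$. By the inductive hypothesis $\mu_k=-\tilde\mu_k$ for every $k<n$, so every group with $\bar\rho\neq\hat 1$ carries a vanishing factor and drops out; the single surviving group is $\bar\rho=\hat 1$, the single block $X$, contributing $\mu_n+\tilde\mu_n$. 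Thus $\mu_n+\tilde\mu_n=0$, whence $\mu_n=-\tilde\mu_n=(-1)^n(n-1)!$. The main obstacle is this last step: correctly identifying the two kinds of interval factors and, above all, recognising that regrouping the vanishing Möbius sum by label assignments factorises it into $\mu_{|B|}+\tilde\mu_{|B|}$ terms, which is exactly the mechanism that makes all but one group cancel.
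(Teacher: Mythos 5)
Your proof is correct. Part one (the formula for $\tilde\mu_n$) is exactly the paper's own argument: the isomorphism $[\sigma_n,\hat{1}_l]_{\Pi_l(X)}\simeq[\hat{0},\hat{1}]_{\Pi(X)}$ together with Theorem~\ref{thm:moebiusPartitionLattice}. Your derivation of $\mu_n$, however, takes a genuinely different route. The paper sums over \emph{upper} intervals, expanding $\sum_{\pi}\mu_{\Pi_l(X)}(\pi,\hat{1}_l)=0$, and prunes this sum \emph{before} inducting: Lemma~\ref{lem:unevenlyLabelled} (which rests on Crapo's Theorem~\ref{thm:crapo}) kills every $\pi$ having both labelled and unlabelled blocks, leaving $\sum_{k=1}^{n}S(n,k)\left(\mu_k+\tilde\mu_k\right)=0$, from which the induction extracts $\mu_n=-\tilde\mu_n$. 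You instead sum over \emph{lower} intervals, $\sum_{\rho}\mu_{\Pi_l(X)}(\hat{0}_l,\rho)=0$, where no summand vanishes a priori; you compute every summand in closed form via Propositions~\ref{prop:product} and~\ref{prop:productmoebius} (your identification of the two kinds of factors is sound: $\hat{0}_l\sqcap B_i$ is the minimum of $\Pi_l(B_i)$, and below an unlabelled block only all-unlabelled partitions lie, so that factor is a copy of $\Pi(B_i)$), and then the sum over the $2^{m}$ labellings of a fixed underlying partition with $m$ blocks factorises as $\prod_{B}\bigl(\mu_{|B|}+\tilde\mu_{|B|}\bigr)$, so that the induction hypothesis itself annihilates every group except the one-block partition. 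What your route buys: it needs neither Lemma~\ref{lem:unevenlyLabelled} nor Crapo's theorem, and it yields as a by-product the explicit values $\mu_{\Pi_l(X)}(\hat{0}_l,\rho)$ for arbitrary $\rho$ --- more than the paper's proof extracts; moreover your vanishing sum is literally the defining convolution $\mu\star\zeta=\delta$ of Definition~\ref{def:moebFunc}, whereas the paper's version uses the transposed identity $\zeta\star\mu=\delta$ without comment. What the paper's route buys: the pruning of the sum is a structural fact about the lattice rather than a consequence of the induction hypothesis, and Lemma~\ref{lem:unevenlyLabelled} is needed again in Section~\ref{sec:transferMatrix} anyway, so the paper reuses machinery it must develop regardless. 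One point you should state explicitly: your induction is a strong induction whose base case $n=1$ is handled by the same argument (the one-block partition is then the only group, so no hypothesis is invoked).
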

\begin{proof}
Observe that the interval $[\sigma_n,\hat{1}_l]_{\Pi_l(X)}$ is isomorphic to the interval
$[\hat{0},\hat{1}]_{\Pi(X)}$ in the partition lattice $\Pi(X)$. Hence we can conclude by
Theorem~\ref{thm:moebiusPartitionLattice}
\begin{align*}
\tilde \mu_n&=\mu_{\Pi(X)}(\hat{0},\hat{1})=(-1)^{n-1}(n-1)!,
\end{align*}
which proves the first claim.

Now the Möbius function satisfies by Definition~\ref{def:moebFunc}
\begin{align*}
\sum_{\pi\in\Pi_l(X)} \mu_{\Pi_l(X)}(\pi,\hat{1}_l)&=\delta_{\Pi_l(X)}(\hat{0}_l,\hat{1}_l)=0,
\end{align*}
with $\delta_{\Pi_l(X)}(\hat{0}_l,\hat{1}_l)=0$, as $\hat{0}_l\ne\hat{1}_l$, whenever $X$ is a non-empty set. 
The application of  Lemma~\ref{lem:unevenlyLabelled} allows the reduction of the above sum to 
the non-vanishing Möbius function values and we have
\begin{align*}
\sum_{k=1}^n S(n,k)\left(\mu_k+\tilde\mu_k\right)&=0.
\end{align*}
Here we used the property, that every interval $[\pi,\hat{1}_l]_{\Pi_l(X)}$ with  
$\pi=B_1L_1|\ldots|B_kL_k$ is isomorphic to the interval $[1L_1|\ldots|kL_k,\hat{1}_l]_{\Pi_l(\{1,\ldots,k\})}$. 

Subsequently we show by induction over $n$ that $\mu_n=-\tilde\mu_n$. For the basic step $n=1$ the claim is
obviously true. Let $l=n+1$ and assume that the claim is true for all $l\le n$. Then we have
after application of the induction hypothesis to the above sum
\begin{align*}
\sum_{k=1}^{n+1} S(n+1,k)\left(\mu_k+\tilde\mu_k\right)&=0 \\
S(n+1,n+1)\left(\mu_{n+1}+\tilde\mu_{n+1}\right)&=0 \\
\mu_{n+1}&=-\tilde\mu_{n+1},
\end{align*}
which proves the claim for $l=n+1$ as well. Hence we have
\begin{align*}
\mu_n&=-\tilde\mu_n=-1\cdot(-1)^{n-1}(n-1)!
\end{align*}
for all $n\ge 1$.
\end{proof}
 
\section{The K-Terminal Reliability\label{sec:kterminalreliability}}
In this section we derive a first splitting approach for the $K$-terminal reliability $R(G,K)$ at a
separating vertex set $X$ in Theorem~\ref{thm:PsplittingFormula}.
Furthermore, we examine $R(G,K)$ by defining
suitable indicator functions following the ideas presented in the PhD thesis of 
Tittmann~\cite{Tittmann1990}.
\begin{Definition}
A {\em graph} $G=(V,E,\varphi)$ is a triple consisting of a finite set
$V$ of {\em vertices} and a finite set $E$ of {\em edges} endowed
with an incidence function $\varphi\colon E\rightarrow
V^{(2)}$. Here $V^{(2)}$ denotes the set of the two-element subsets of $V$. 

For convenience of display we often tacitly omit the 
incidence function $\varphi$ and just write $G=(V,E)$. 
Let $F\subseteq E$, then $H=(V,F)$ is a \emph{spanning subgraph} of $G$ 
and we write $H\subseteq G$.
\end{Definition}

\begin{Definition}
Let $G=(V,E,\varphi)$ be a graph and $\pi=B_1|\ldots|B_k\in\Pi(X)$ with
$X\subseteq V$. Then $G_\pi=(V_\pi,E_\pi,\varphi_\pi)$ denotes the 
\emph{$\pi$-merging of $G$} with
vertex set 
\begin{align}
V_\pi&=(V\setminus X)\cup \{B_1,\ldots, B_k\}
\end{align}
and edge set
\begin{align}
E_\pi&=\{e\in E:\varphi(e) \text{ is not a subset of a block in } \pi \},
\end{align}
where the incidence function $\varphi_\pi\colon E_\pi\rightarrow V_\pi^{(2)}$ is
given by
\begin{align}
\varphi_\pi(e)&=
\begin{cases}
	\{B_i, B_j\} & \text{if } \varphi(e)=\{u,v\}, u\in B_i, v\in B_j, B_i\ne B_j \\
	\{B_i, v\}   &  \text{if } \varphi(e)=\{u,v\}, u\in B_i, v\in V\setminus X \\
	\varphi(e) & \text{else}.
\end{cases}
\end{align}
In other words, $G_\pi$ denotes the graph that emerges from $G$ by merging all
vertices in $G$ that are in a same block in $\pi$, where possibly occurring
parallel edges are kept and loops are removed. In the case of the one block set
partition $\pi=\{X\}=\hat{1}$, we simply write $G_X=(V_X,E_X)$.
\end{Definition}

\begin{Definition}
Let $G=(V,E)$ be a graph and $p\colon{}E\rightarrow [0,1], e\mapsto p(e)$. 
Assume now that the edges $e\in E$ of $G$ are failing
independently with the probabilities $q(e):=1-p(e)$. We say that the pair $(G,p)$ 
is a {\em stochastic network}. 
In the following we identify the graph $G$ and its corresponding stochastic network
$(G,p)$ if there is no danger of confusion. The probability that the spanning subgraph
$H=(V,F)\subseteq G$ is realised equals
\begin{align}
\pr(H)&=\prod_{e\in F} p(e) \prod_{e\in E\setminus F} q(e).
\end{align}
\end{Definition}

\begin{Definition}
A \emph{$K$-graph $(G,K)$} is a pair consisting of a graph $G=(V,E)$ and a subset $K\subseteq V$
of \emph{terminal vertices} with $|K|\ge 2$.
Every $K$-graph $(G,K)$ induces a labelled set partition
\begin{align}
\{(G,K)\}&=V_1 L_1|\ldots|V_rL_r\in \Pi_l(V),
\end{align}
where two vertices $u$, $v\in V$ are in a same $V_i$ if and only if 
$u$ and $v$ are connected in $G$ and we set $L_i=\{l\}$ if $V_i\cap K\ne\emptyset$ 
and $L_i=\emptyset$ otherwise.

A graph $G=(V,E)$ is {\em $K$-connected} if all terminal vertices in $K$ are mutually 
connected in $G$ or in other words $\{(G,K)\}$ has exactly one labelled block. 
Finally, we define the {\em $K$-connectedness indicator $M(G, K)$} by 
\begin{align}
M(G,K)&=
\begin{cases}
1 & \text{$G$ is $K$-connected} \\
0 & \text{else}.
\end{cases}
\end{align}
\end{Definition}

\begin{Definition}\label{def:kterminalReliability}
Let $(G,K)$ be a $K$-graph and $(G,p)$ a stochastic network. We say that
$(G,K)$ is a {\em $K$-network} and the {\em $K$-terminal reliability
$R(G,K)$} is the probability that $G$ is $K$-connected. Thus
\begin{align}
R(G,K)&=\sum_{H\subseteq G}M(H,K)\pr(H).
\end{align}
\end{Definition}

\begin{Definition}
Let $(G,K)$ be a $K$-graph and $G^1=(V^1,E^1)$, $G^2=(V^2,E^2)$ subgraphs of $G$, so
that $E^1\cup E^2=E$, $E^1\cap E^2=\emptyset$, $V^1\cup V^2=V$ and $V^1\cap
V^2=X$ holds. Furthermore we demand that $K^1:=V^1\cap K\ne\emptyset$ and
$K^2:=V^2\cap K\ne \emptyset$ is satisfied,
so that each of the two subgraphs $G^1$ and $G^2$ contains at least one terminal vertex.

Under these two conditions $(G^1,G^2,X)$ is said to be a \emph{$K$-splitting of $(G,K)$} 
with \emph{separating vertex set $X$}. 
\end{Definition}

\begin{Definition}
Define the function $m\colon \Pi_l(X)\rightarrow\{0,1\},\pi \mapsto
m(\pi)$ by
\begin{align}
m(\pi)&=
\begin{cases}
1 & \pi \text{ has exactly one labelled block} \\
0 & \text{else}.
\end{cases}
\end{align}
\end{Definition}

\begin{Definition}
Let $(G^1,G^2,X)$ be a $K$-splitting of $(G,K)$ and define
\begin{align}
D(G^i, \pi)&=
\begin{cases}
1 & \{(G^i,K^i)\}\sqcap X=\pi \\
0 & \text{ else}
\end{cases}
\end{align}
for all $\pi\in\Pi_l(X)$ and $i=1,2$.
\end{Definition}

\begin{Definition}
Let $(G^1,G^2,X)$ be a $K$-splitting of $(G,K)$. Define for $i=1,2$ the $K$-graphs
$(G_X^i,K_X^i)$ with terminal vertex set $K_X^i=(K^i\setminus X)\cup \{X\}$.
\end{Definition}

\begin{Remark}\label{rem:implication}
Let $(G^1,G^2,X)$ be a $K$-splitting of $(G,K)$.
Observe that the $K$-connectedness of $G$ implies that every terminal
vertex $v\in K^i$ in $G^i$ is connected to at least one vertex in $X$ in $G^i$ for $i=1,2$.
Note that the above implication can be restated by using the $K$-connectedness indicator as
\begin{align}
M(G,K)=1\quad \Longrightarrow\quad M(G_X^1, K_X^1)M(G_X^2,K_X^2)=1.
\end{align}
\end{Remark}

\begin{Definition}
Let $(G^1,G^2,X)$ be a $K$-splitting of $(G,K)$ and $\pi\in\Pi_l(X)$.
The {\em partition probability $P(G^i,\pi)$} is defined as
\begin{align}
P(G^i,\pi)&=\sum_{H^i\subseteq G^i} D(H^i,\pi)M(H_X^i,K_X^i) \pr(H^i)
\qquad \text{ for } i=1,2.
\end{align}
\end{Definition}

\begin{Definition}
Let $\pi=B_1L_1|\ldots|B_kL_k\in\Pi_l(X)$ and $(G^1,G^2,X)$ be a $K$-splitting of $(G,K)$. 
We say that $(G_\pi^i,K_\pi^i)$ for $i=1,2$ is the \emph{$\pi$-merged $K$-graph of $(G^i,K^i)$}, where
$G_\pi^i$ denotes the $\pi'$-merging of $G^i$ with $\pi'=B_1|\ldots|B_k\in\Pi(X)$ and $K_{\pi}^i$ the 
terminal vertex set
\begin{align}
K_\pi^i&=(K^i\setminus X) \cup \{B_i\in\pi'\colon L_i=\{l\} \text{ or } B_i\cap K^i\ne\emptyset\}.
\end{align} 
\end{Definition}

\begin{Definition}
Let $X=\{x_1,\ldots, x_n\}$ and $(G^1,G^2,X)$ a $K$-splitting of $(G,K)$.
Denote by $\pi_X\in\Pi_l(X)$ the labelled 
set partition $\pi_X=x_1L_1|\ldots |x_nL_n$ with $L_i=\{l\}$ whenever $x_i\in K$ and
$L_i=\emptyset$ otherwise. 

Furthermore, let $\Pi_l(X,\pi_X)$ be the set of all labelled 
set partitions $\pi\ge\pi_X$ in $\Pi_l(X)$ with at least one labelled block.

We denote by $P(n,k)$ the number of elements in $\Pi_l(X, \pi_X)$, where we assume that 
$\pi_X$ has $k$ labelled and $n-k$ unlabelled blocks.
\end{Definition}

\begin{Theorem}
The numbers $P(n,k)$ are given by
\begin{align*}
P(n,0)&=\sum_{j=1}^{n}\binom{n}{j}B(j)B(n-j),  \\
P(n,k)&=\sum_{j=0}^{n-k}\binom{n-k}{j}B(k+j)B(n-k-j) 
\end{align*}
for $n\ge 1$ and $k\ge 1$.
\end{Theorem}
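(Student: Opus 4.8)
The plan is to translate the partial order condition $\pi\ge\pi_X$ into a transparent constraint on the labels of the blocks of $\pi$, and then to count the admissible labelled set partitions by separating their labelled from their unlabelled parts. First I would unpack the definition of $\Pi_l(X,\pi_X)$. Since $\pi_X=x_1L_1|\ldots|x_nL_n$ consists only of singletons, the relation $\pi\ge\pi_X$ imposes nothing at the level of the underlying unlabelled partition; its entire content lives in the labels. Indeed, a labelled singleton $\{x_i,l\}$ of $\pi_X$ can be a subset of a block $B\cup L$ of $\pi$ only if $L=\{l\}$, whereas an unlabelled singleton $\{x_i\}$ places no condition on the label of its containing block. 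Hence I claim that $\pi\ge\pi_X$ holds if and only if every block of $\pi$ that meets one of the $k$ labelled (terminal) elements is itself labelled. Verifying this equivalence carefully is the first key step.

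Given this reformulation, $\Pi_l(X,\pi_X)$ is exactly the set of labelled set partitions $\pi$ of $X$ in which every block containing a terminal element is labelled and which have at least one labelled block. I would call the remaining $n-k$ elements \emph{ordinary}. Next I would set up a bijection by recording, for each admissible $\pi$, the number $j$ of ordinary elements that lie in labelled blocks. Since the reformulated constraint forces all $k$ terminal elements into labelled blocks, the labelled blocks of $\pi$ form a set partition of a $(k+j)$-element set, while the unlabelled blocks form a set partition of the remaining $n-k-j$ ordinary elements. Conversely, a choice of which $j$ ordinary elements are placed into labelled blocks, together with an arbitrary set partition of the resulting $k+j$ elements (all of whose blocks are declared labelled) and an arbitrary set partition of the remaining $n-k-j$ ordinary elements (all unlabelled), reconstructs a unique admissible $\pi$. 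This gives the count $\binom{n-k}{j}B(k+j)B(n-k-j)$ for each fixed $j$, where $\binom{n-k}{j}$ selects the ordinary elements sent into labelled blocks and $B(k+j)$, $B(n-k-j)$ count the labelled and unlabelled partitions.

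It then remains to fix the range of $j$ according to the \emph{at least one labelled block} requirement, which is precisely where the two formulas diverge. If $k\ge 1$, then at least one terminal element is present and its block is necessarily labelled, so the requirement is automatic and $j$ ranges over $0\le j\le n-k$, yielding the second formula. If $k=0$, no terminal forces a labelled block, so I must impose $j\ge 1$, i.e. at least one ordinary element must land in a labelled block; letting $j$ run from $1$ to $n$ then yields the first formula. I expect the main obstacle to be the careful verification of the label reformulation in the first step, since one must correctly distinguish the roles of labelled and unlabelled singletons of $\pi_X$ under the block-containment order; everything afterwards is a routine product-of-partitions count.
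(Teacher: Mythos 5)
Your proposal is correct and follows essentially the same route as the paper's own proof: choose which $j$ of the $n-k$ ordinary (unlabelled-in-$\pi_X$) elements go into labelled blocks, count $B(k+j)$ partitions of the labelled part and $B(n-k-j)$ of the unlabelled part, and handle the ``at least one labelled block'' condition by restricting $j\ge 1$ when $k=0$. Your write-up is in fact more careful than the paper's, since you explicitly verify that $\pi\ge\pi_X$ is equivalent to the condition that every block meeting a terminal element is labelled, a step the paper leaves implicit.
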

\begin{proof}
Let $X$ be a non-empty $n$-element set and consider the labelled set partition $\pi_X$ with $k$ labelled and $n-k$ 
unlabelled blocks and the number of possible ways to construct a labelled set partition $\sigma\ge \pi_X$ 
with at least one labelled block. 

We can choose in $\binom{n-k}{j}$ different ways $j$ of the $n-k$ unlabelled blocks of $\pi_X$ being 
labelled in $\sigma$.

Afterwards we have $B(k+j)$ possibilities to partition the labelled blocks and $B(n-k-j)$ choices to
partition the remaining unlabelled blocks. In the case $k\ge 1$ there is always at least
one labelled block. For $k=0$ we ensure the existence of at least one labelled block in $\sigma$
by demanding $j\ge 1$.
\end{proof}

\begin{Theorem}\label{thm:kterminalSplitting}
Let $(G^1,G^2,X)$ be a $K$-splitting of $(G,K)$. Then
\begin{multline}
M(G,K)=\\
M(G^1_X,K^{1}_X)
M(G^2_X,K^{2}_X)
\sum_{\sigma_1,\sigma_2\in\Pi_l(X,\pi_X)}
D(G^1,\sigma_1)m(\sigma_1\vee\sigma_2)D(G^2,\sigma_2).
\end{multline}
\end{Theorem}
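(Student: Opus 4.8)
The identity is a deterministic statement about the three fixed graphs $G$, $G^1$ and $G^2$, so the plan is to evaluate both sides and compare. First I would split into two cases according to the value of the prefactor $M(G^1_X,K^1_X)M(G^2_X,K^2_X)$. If this product equals $0$, then by Remark~\ref{rem:implication} we also have $M(G,K)=0$; both sides vanish and there is nothing to prove.

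The substance lies in the case $M(G^1_X,K^1_X)M(G^2_X,K^2_X)=1$, where I must show that the double sum equals $M(G,K)$. The key observation is that $M(G^i_X,K^i_X)=1$ means precisely that every terminal vertex of $K^i$ is connected in $G^i$ to some vertex of $X$; hence no labelled block of $\{(G^i,K^i)\}$ is lost under the restriction to $X$. Writing $\sigma_i:=\{(G^i,K^i)\}\sqcap X$, I would then verify that $\sigma_i\in\Pi_l(X,\pi_X)$: every terminal of $X$ lies, by construction, in a labelled block of $\sigma_i$, which yields $\sigma_i\ge\pi_X$, and $K^i\ne\emptyset$ together with the connectivity just noted forces at least one labelled block. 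Since $D(G^i,\cdot)$ is the indicator of $\{(G^i,K^i)\}\sqcap X$, exactly one term of the double sum survives --- the one with $\sigma_1=\{(G^1,K^1)\}\sqcap X$ and $\sigma_2=\{(G^2,K^2)\}\sqcap X$ --- and the right-hand side collapses to $m(\sigma_1\vee\sigma_2)$.

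It remains to identify $m(\sigma_1\vee\sigma_2)$ with $M(G,K)$, which is the heart of the argument. I claim that $\{(G,K)\}\sqcap X=\sigma_1\vee\sigma_2$. For the underlying unlabelled partition this is a pure connectivity statement: two vertices $x,y\in X$ are connected in $G$ if and only if they are linked by the transitive closure of the relations ``connected in $G^1$'' and ``connected in $G^2$'', restricted to $X$, which is exactly the join of the two block structures. The proof is the standard path-splitting argument: any path in $G$ from $x$ to $y$ decomposes into maximal segments lying entirely in $G^1$ or in $G^2$, and because $V^1\cap V^2=X$ every vertex at which two segments meet must lie in $X$, so the successive segment endpoints chain $x$ to $y$ through blocks of $\sigma_1$ and $\sigma_2$. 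For the labels I would use that a join block carries the label $l$ exactly when at least one of its constituent blocks does (a consequence of the label-monotonicity forced by the order on $\Pi_l(X)$) and match this against the terminals of the corresponding component of $G$, again invoking that every terminal reaches $X$.

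Finally, since all terminals of $K=K^1\cup K^2$ reach $X$ in $G$, the restriction $\sqcap X$ neither deletes nor merges any labelled block of $\{(G,K)\}$, so $\{(G,K)\}$ and $\{(G,K)\}\sqcap X$ have the same number of labelled blocks. Hence $m(\sigma_1\vee\sigma_2)=1$ if and only if $\{(G,K)\}$ has exactly one labelled block, i.e.\ if and only if $G$ is $K$-connected, which is exactly $M(G,K)$. The step I expect to be the main obstacle is the labelled refinement of the join identity: the unlabelled connectivity claim is routine, but tracking which join blocks carry the label $l$ and reconciling this with the components of $G$ that genuinely contain a terminal is where the hypothesis $M(G^i_X,K^i_X)=1$ must be used precisely, and where an imprecise treatment of terminals lying in $X$ (which belong to both $K^1$ and $K^2$) could easily go astray.
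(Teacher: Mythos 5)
Your proposal is correct and follows essentially the same route as the paper: the same case split via Remark~\ref{rem:implication}, the same collapse of the double sum to the single surviving term $m(\sigma_1\vee\sigma_2)$ via the indicators $D(G^i,\cdot)$, and the same identification of that join with $K$-connectedness of $G$ (your partition identity $\{(G,K)\}\sqcap X=\sigma_1\vee\sigma_2$ with the path-splitting argument is just a more explicit rendering of the paper's chain-of-blocks characterisation). No gaps; if anything, your treatment of the labels under restriction to $X$ is more careful than the paper's.
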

\begin{proof}
Assume first that there exists a terminal vertex $v\in K$, that is not connected
to a vertex in $X$. In this case we have 
\begin{align}
M(G^1_X,K^1_X)M(G^2_X,K^2_X)=0
\end{align}
and by the contraposition of Remark~\ref{rem:implication} we find $M(G,K)=0$
as well, so that the equation is valid in this trivial case.

Hence we can assume now that every terminal vertex $v\in K$ is connected to at least
one vertex in $X$ or in other words $M(G^1_X,K^1_X)M(G^2_X,K^2_X)=1$.

As $K^1$ and $K^2$ are non-empty sets, we can ensure
that every subgraph $G^i$ induces exactly one labelled set partition $\rho_i$ in $X$,
which has at least one labelled block and we conclude that $\rho_i\in \Pi_l(X,\pi_X)$.
Therefore the equation simplifies to
\begin{align*}
M(G,K)&=m(\rho_1\vee\rho_2).
\end{align*}
Observe now that under the above assumptions $G$ is $K$-connected if and only if for every 
two vertices $u,v\in X$, that are connected to a labelled vertex in $G$, there is a sequence 
of blocks $B_1,\ldots, B_k\in \rho_1\cup\rho_2$ with $B_i\cap B_{i+1}\ne\emptyset$ for 
$i=1,\ldots, k-1$, so that $u\in B_1$,$v\in B_k$.

This last characterisation is equivalent to the condition 
that $\rho_1\vee\rho_2$ has exactly one labelled block or in other words $m(\rho_1\vee\rho_2)=1$.
\end{proof}

\begin{Definition}
Let $(G^1,G^2,X)$ be a $K$-splitting of $(G,K)$.
Denote by $\pvec(G^i)$ and $\rvec(G^i)$ the vectors containing the probabilities
$P(G^i,\pi)$ and $R(G^i_\pi,K_\pi^i)$ for all $\pi\in\Pi_l(X,\pi_X)$ and 
define the \emph{transfer matrix} $\M$ as
\begin{align}
\M=(m(\pi\vee\sigma))_{\substack{\pi\in\Pi_l(X,\pi_X)\\\sigma\in\Pi_l(X,\pi_X)}}.
\end{align}
\end{Definition}

\begin{Theorem}\label{thm:PsplittingFormula}
Let $(G^1,G^2,X)$ be a $K$-splitting of $(G,K)$. Then
\begin{align}
R(G,K)&=\pvec(G^1)^T\M\pvec(G^2).
\end{align}
\end{Theorem}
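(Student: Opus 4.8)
The plan is to expand $R(G,K)$ according to Definition~\ref{def:kterminalReliability} as a sum over spanning subgraphs, rewrite the $K$-connectedness indicator of each subgraph by means of Theorem~\ref{thm:kterminalSplitting}, and then factor the resulting double sum into the announced bilinear form. The heavy combinatorial lifting has already been done in Theorem~\ref{thm:kterminalSplitting}, so what remains is essentially a reorganisation of sums together with a probabilistic factorisation.

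First I would exploit the hypotheses $E^1\cap E^2=\emptyset$ and $E^1\cup E^2=E$: every spanning subgraph $H=(V,F)\subseteq G$ corresponds bijectively to the pair $(H^1,H^2)$ of spanning subgraphs $H^i=(V^i,F\cap E^i)\subseteq G^i$, and since the edges fail independently over the disjoint edge sets $E^1$ and $E^2$, the realisation probability factors as $\pr(H)=\pr(H^1)\pr(H^2)$. Because $H^i$ shares the vertex set of $G^i$, the terminal sets $K^i=V^i\cap K$ are unaffected by passing to a spanning subgraph, so $(H^1,H^2,X)$ is again a $K$-splitting of $(H,K)$ and Theorem~\ref{thm:kterminalSplitting} applies verbatim to it. Substituting that identity into Definition~\ref{def:kterminalReliability} gives
\begin{multline*}
R(G,K)=\sum_{H^1\subseteq G^1}\sum_{H^2\subseteq G^2} M(H^1_X,K^1_X)\,M(H^2_X,K^2_X)\,\pr(H^1)\pr(H^2) \\
\times \sum_{\sigma_1,\sigma_2\in\Pi_l(X,\pi_X)} D(H^1,\sigma_1)\,m(\sigma_1\vee\sigma_2)\,D(H^2,\sigma_2).
\end{multline*}

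Next I would move the finite sum over $\sigma_1,\sigma_2$ to the front and observe that $m(\sigma_1\vee\sigma_2)$ is independent of $H^1$ and $H^2$; the two subgraph sums then separate, each factor matching exactly the defining sum of the partition probability $P(G^i,\sigma_i)$. This yields
\begin{align*}
R(G,K)=\sum_{\sigma_1,\sigma_2\in\Pi_l(X,\pi_X)} P(G^1,\sigma_1)\,m(\sigma_1\vee\sigma_2)\,P(G^2,\sigma_2),
\end{align*}
which is precisely the coordinate expansion of $\pvec(G^1)^T\M\pvec(G^2)$.

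The main obstacle is therefore not conceptual but one of careful bookkeeping. I expect the delicate points to be, first, justifying that Theorem~\ref{thm:kterminalSplitting} may be invoked for each individual pair $(H^1,H^2)$ rather than only for the ambient splitting $(G^1,G^2,X)$, which hinges on the observation that $K^1$ and $K^2$ remain non-empty under restriction to spanning subgraphs; and second, checking that the index set $\Pi_l(X,\pi_X)$ over which $\sigma_1,\sigma_2$ range is exactly the index set used to build $\pvec(G^i)$ and $\M$, so that the separated factors are assembled into the bilinear form in the correct order.
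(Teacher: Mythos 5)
Your proposal is correct and follows essentially the same route as the paper: expand $R(G,K)$ via Definition~\ref{def:kterminalReliability}, apply Theorem~\ref{thm:kterminalSplitting} to each $M(H,K)$, and regroup the sums using the definition of the partition probability to obtain the bilinear form $\pvec(G^1)^T\M\pvec(G^2)$. The paper's own proof is merely a terser version of this; the details you supply (the bijection $H\leftrightarrow(H^1,H^2)$, the factorisation $\pr(H)=\pr(H^1)\pr(H^2)$ over the disjoint edge sets, and the check that $(H^1,H^2,X)$ is again a $K$-splitting of $(H,K)$) are exactly the steps the paper leaves implicit.
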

\begin{proof}
The $K$-terminal reliability $R(G,K)$ is by Definition~\ref{def:kterminalReliability}
\begin{align*}
R(G,K)&=\sum_{H\subseteq G}M(H,K)\pr(H).
\end{align*}
The application of Theorem~\ref{thm:kterminalSplitting} to $M(H,K)$
and the definition of the partition probability yields
\begin{align*}
R(G,K)=
\sum_{
\sigma_1,\sigma_2\in\Pi_l(X,\pi_X)
}
P(G^1,\sigma_1)m(\sigma_1\vee\sigma_2)P(G^2,\sigma_2),
\end{align*}
which equals the stated matrix equation.
\end{proof}

In Theorem~\ref{thm:rplemma} we give a slight generalisation of a theorem found in the 
PhD thesis of Tittmann~\cite{Tittmann1990}.
\begin{Theorem}
\label{thm:rplemma}
Let $(G^1,G^2,X)$ be a $K$-splitting of $(G,K)$. Then
\begin{align}
M(G^i_\pi,K_\pi^i)&=
M(G_X^i,K_X^i)\sum_{\sigma\in\Pi_l(X,\pi_X)}
D(G^i, \sigma)m(\pi\vee\sigma)
\end{align}
holds for all $\pi\in\Pi_l(X,\pi_X)$ and $i=1,2$.
\end{Theorem}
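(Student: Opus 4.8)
The plan is to follow the proof of Theorem~\ref{thm:kterminalSplitting} closely, since both statements match a $K$-connectedness indicator against a join of labelled set partitions, but now for the single merged graph $G^i_\pi$ rather than the full graph $G$. I fix $i$ and begin by simplifying the right-hand side. Because $D(G^i,\sigma)=1$ for exactly one $\sigma\in\Pi_l(X)$, namely $\rho_i:=\{(G^i,K^i)\}\sqcap X$, the sum reduces to $m(\pi\vee\rho_i)$ whenever $\rho_i$ belongs to the index set $\Pi_l(X,\pi_X)$. The inequality $\rho_i\ge\pi_X$ holds in general: every singleton $\{x\}$ that is labelled in $\pi_X$ has $x\in K\cap X\subseteq K^i$, so the $G^i$-component of $x$ contains a terminal and the block of $\rho_i$ through $x$ is labelled. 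I then split the proof according to the value of $M(G^i_X,K^i_X)$.

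Consider first $M(G^i_X,K^i_X)=0$, so that the right-hand side vanishes. Unravelling the definitions of $M$ and of $K^i_X=(K^i\setminus X)\cup\{X\}$, this forces a terminal $v\in K^i\setminus X$ that is not connected to any vertex of $X$ in $G^i$. As $\pi\in\Pi_l(X,\pi_X)$ always carries at least one labelled block $B_{j_0}$, the merged vertex $B_{j_0}$ is a terminal of $K^i_\pi$. The $G^i$-component of $v$ meets $X$ in no vertex, hence is left untouched by the $\pi$-merging and survives as a component of $G^i_\pi$ that contains no merged block; thus $v$ and $B_{j_0}$ are distinct terminals of $K^i_\pi$ in different components and $M(G^i_\pi,K^i_\pi)=0$, as required.

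Now suppose $M(G^i_X,K^i_X)=1$; here the right-hand side is $m(\pi\vee\rho_i)$ once we note that $\rho_i$ does carry a labelled block (it is produced by a terminal of $K^i\setminus X$ connected to $X$, or by a terminal vertex in $K^i\cap X$), so $\rho_i\in\Pi_l(X,\pi_X)$. It remains to prove $M(G^i_\pi,K^i_\pi)=m(\pi\vee\rho_i)$. The crucial observation is that, for connectivity purposes, forming $G^i_\pi$ amounts to joining together all vertices lying in a common block of $\pi$; hence two vertices of $X$ share a component of $G^i_\pi$ exactly when they share a block of the ordinary join $\rho_i\vee\pi$, and the components of $G^i_\pi$ that meet $X$ correspond bijectively to the blocks of $\rho_i\vee\pi$. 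I then check that this correspondence preserves labels: since $\pi\ge\pi_X$, the condition $B_j\cap K^i\ne\emptyset$ already implies that $B_j$ is labelled in $\pi$, so the terminal merged blocks of $K^i_\pi$ are precisely the labelled blocks of $\pi$, while each terminal $v\in K^i\setminus X$ --- which by $M(G^i_X,K^i_X)=1$ reaches $X$ --- labels exactly the block of $\rho_i$ to which it attaches. Consequently a block of $\rho_i\vee\pi$ is labelled if and only if its component of $G^i_\pi$ contains a $K^i_\pi$-terminal, and the same assumption $M(G^i_X,K^i_X)=1$ guarantees that no $K^i_\pi$-terminal lives in a component avoiding $X$. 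Therefore all $K^i_\pi$-terminals lie in one component precisely when $\rho_i\vee\pi$ has a single labelled block, which is the equality $M(G^i_\pi,K^i_\pi)=m(\pi\vee\rho_i)$.

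The step I expect to be the main obstacle is the label-preserving bijection of the previous paragraph: pinning down that the components of $G^i_\pi$ meeting $X$ are exactly the blocks of $\rho_i\vee\pi$, and that a block is labelled if and only if its component holds a terminal. The two facts that make this clean are the $\pi\ge\pi_X$ simplification of the terminal condition on merged blocks and the consequence of $M(G^i_X,K^i_X)=1$ that every relevant terminal reaches $X$; both prevent terminals from being miscounted. Since this is precisely the single-graph analogue of the connectivity characterisation already carried out in the proof of Theorem~\ref{thm:kterminalSplitting}, I would invoke that reasoning rather than redo it in full.
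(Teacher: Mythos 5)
Your proof is correct and follows essentially the same route as the paper: the same case split on whether every terminal of $K^i$ reaches $X$ (equivalently on the value of $M(G^i_X,K^i_X)$), followed by collapsing the sum to the single summand $m(\pi\vee\rho_i)$ via the indicator $D(G^i,\cdot)$. You merely spell out the connectivity correspondence between components of $G^i_\pi$ and blocks of $\pi\vee\rho_i$ that the paper leaves implicit in its final sentence, which is a welcome addition rather than a deviation.
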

\begin{proof}
First assume that there is a terminal vertex $v\in K^i$, which is not connected to 
a vertex in $X$. In this case we have $M(G_X^i,K_X^i)=0$ and $M(G^i_\pi,K_\pi^i)=0$
as well, as the graph $G^i_\pi$ has at least one 
terminal vertex in $X$, because $\pi\in\Pi_l(X,\pi_X)$ has at at least one labelled block. 
Therefore the equation is valid in this trivial case.

Hence we can assume from now on, that every terminal vertex $v\in K^i$ is connected
to a vertex in $X$ or in other words $M(G_X^i,K_X^i)=1$. As $K^i\ne\emptyset$ we
conclude that $G^i$ induces in $X$ exactly one labelled set partition $\rho$
with at least one labelled block, that satisfies $\rho\ge \pi_X$. This leaves us
with one summand
\begin{align}
M(G^i_\pi,K_\pi^i)&=m(\pi\vee\rho),
\end{align}
which is a valid equation by considering the properties of $\pi\vee\rho$ and $(G_\pi^i,K_\pi^i)$.
\end{proof}

\begin{Theorem}\label{thm:cformula}
Let $(G^1,G^2,X)$ be a $K$-terminal splitting of $(G,K)$. Then
\begin{align}
\rvec(G^i)&=\M\pvec(G^i) 
\end{align}
holds for $i=1,2$.
\end{Theorem}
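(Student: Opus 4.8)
The plan is to recognise that the asserted matrix identity is nothing but the expectation, taken over all random spanning subgraphs of $G^i$, of the pointwise identity already established in Theorem~\ref{thm:rplemma}. Fixing $\pi\in\Pi_l(X,\pi_X)$, I would first rewrite the reliability of the merged network as a sum over spanning subgraphs of $G^i$ rather than of $G^i_\pi$. Concretely, $G^i_\pi$ arises from $G^i$ by merging the blocks of $\pi$ and discarding the edges that thereby become loops; since $M(H_\pi,K_\pi^i)$ is unaffected by the presence or absence of precisely those loop edges, summing over their independent failure states contributes a factor $1$, and Definition~\ref{def:kterminalReliability} yields
\[
R(G^i_\pi,K_\pi^i)=\sum_{H\subseteq G^i} M(H_\pi,K_\pi^i)\,\pr(H),
\]
where $H_\pi$ denotes the $\pi$-merging of the spanning subgraph $H$. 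This subgraph correspondence is the step I expect to require the most care, as it is the only place where the removal of loops in the merging operation must be reconciled with the edge-wise probability weighting.

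Next I would observe that Theorem~\ref{thm:rplemma}, although phrased for $G^i$, is really a combinatorial statement about a single graph on a vertex set containing $X$ with terminal set $K^i$; its proof invokes no property of $G^i$ beyond this, in particular not the splitting structure. Hence it applies verbatim to every spanning subgraph $H\subseteq G^i$, giving
\[
M(H_\pi,K_\pi^i)=M(H_X,K_X^i)\sum_{\sigma\in\Pi_l(X,\pi_X)} D(H,\sigma)\,m(\pi\vee\sigma)
\]
for the fixed $\pi$. I would then substitute this into the previous display, weight by $\pr(H)$, and interchange the two finite sums, which is unproblematic. The coefficient of $m(\pi\vee\sigma)$ becomes $\sum_{H\subseteq G^i} D(H,\sigma)\,M(H_X,K_X^i)\,\pr(H)$, which is exactly the partition probability $P(G^i,\sigma)$.

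This leaves the scalar identity
\[
R(G^i_\pi,K_\pi^i)=\sum_{\sigma\in\Pi_l(X,\pi_X)} m(\pi\vee\sigma)\,P(G^i,\sigma),
\]
and since the $(\pi,\sigma)$-entry of the transfer matrix $\M$ is $m(\pi\vee\sigma)$, reading this off for every $\pi\in\Pi_l(X,\pi_X)$ is precisely the claim $\rvec(G^i)=\M\pvec(G^i)$. The main obstacle is therefore the first step: the bookkeeping that reduces the sum over subgraphs of $G^i_\pi$ to a sum over subgraphs of $G^i$, so that the pointwise identity of Theorem~\ref{thm:rplemma} can be applied term by term before averaging.
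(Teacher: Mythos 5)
Your proposal is correct and takes essentially the same route as the paper's own proof: fix $\pi\in\Pi_l(X,\pi_X)$, apply Theorem~\ref{thm:rplemma} termwise to every spanning subgraph $H\subseteq G^i$, weight by $\pr(H)$, interchange the sums, and recognise the partition probabilities $P(G^i,\sigma)$, which is precisely the row of $\rvec(G^i)=\M\pvec(G^i)$ corresponding to $\pi$. Your write-up merely makes explicit the bookkeeping (the loop-edge reduction identifying $R(G^i_\pi,K^i_\pi)$ with a sum over subgraphs of $G^i$, and the legitimacy of invoking Theorem~\ref{thm:rplemma} for each $H$) that the paper's very terse proof leaves implicit.
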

\begin{proof}
Let $\pi\in\Pi_l(X,\pi_X)$ and consider the row of the above matrix
equation corresponding to $\pi$
\begin{align*}
R(G_\pi^i,K_\pi^i)&=
\sum_{\sigma\in\Pi_l(X,\pi_X)}
P(G^i,\sigma)m(\pi\vee\sigma).
\end{align*}
Observe that this equation holds by Theorem~\ref{thm:rplemma}, when we consider
the definition of the partition probability and a summation over all possible
subgraphs.
\end{proof}
 
\section{The Transfer Matrix\label{sec:transferMatrix}}
In his PhD thesis Tittmann~\cite{Tittmann1990} observed that the transfer matrix is generally 
not invertible.
This section states a factorisation of the $\M$ matrix and gives a condition for the existence
of $\M^{-1}$ by Corollary~\ref{cor:invertibility}.
This factorisation is then used in the computation of the $K$-terminal reliability
in Section~\ref{sec:splittingFormula}. 
Factorisations of supremum matrices are considered by Wilf~\cite{Wilf1968}, Smith~\cite{Smith1875}
and Lindström~\cite{Lindstrom1969} to solve problems in combinatorics and number theory. 
Haukkanen and Korkee~\cite{Korkee2008} give further remarks on determinants and 
inverses of supremum matrices. A nice introduction into the versatile Möbius inversion principle is given by Bender and
Goldmann~\cite{Bender1975}.

\begin{Theorem}[Wilf~\cite{Wilf1968}]\label{thm:wilf}
Let $P$ be a finite upper semilattice with $P=\{p_1,\ldots, p_n\}$, so that $p_i\le p_j$ implies $i\le j$ and
define the upper triangular Zeta-matrix $\E=(e_{ij})$ of format $n\times n$ with entries $e_{ij}=\zeta_P(p_i,p_j)$.

Furthermore let $f,g\colon P\rightarrow \Real$ be two functions defining the vectors $\fvec=(f(p_i))$ and 
$\gvec=(g(p_i))$ of length $n$, so that 
\begin{align*}
\fvec=\E \gvec 
\end{align*}
is satisfied.

Now denote by $\mathbf{F}=(f_{ij})$ and $\mathbf{G}=(g_{ij})$ matrices of format $n\times n$
with $f_{ij}=f(p_i\vee p_j)$ and $\mathbf{G}$ being a diagonal matrix with entries $g_{ii}=g(p_i)$. Then
\begin{align}
\mathbf{F}&=
\mathbf{E}\mathbf{G}\mathbf{E}^T.
\end{align}
\end{Theorem}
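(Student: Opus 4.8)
The plan is to prove the matrix identity $\mathbf{F} = \mathbf{E}\mathbf{G}\mathbf{E}^T$ by comparing the $(i,j)$-entry of both sides directly, using the hypothesis $\fvec = \E\gvec$ to re-express the function $f$ in terms of $g$ via the Zeta-matrix. First I would write out the $(i,j)$-entry of the right-hand side: since $\mathbf{G}$ is diagonal with $g_{kk}=g(p_k)$, the matrix product gives
\begin{align*}
(\mathbf{E}\mathbf{G}\mathbf{E}^T)_{ij}=\sum_{k=1}^n e_{ik}\,g(p_k)\,e_{jk}=\sum_{k=1}^n \zeta_P(p_i,p_k)\,\zeta_P(p_j,p_k)\,g(p_k).
\end{align*}
Each product $\zeta_P(p_i,p_k)\zeta_P(p_j,p_k)$ equals $1$ precisely when $p_i\le p_k$ and $p_j\le p_k$ both hold, and $0$ otherwise, so the sum ranges exactly over those $p_k$ that are common upper bounds of $p_i$ and $p_j$.

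The key step is to recognise that in an upper semilattice the condition ``$p_i\le p_k$ and $p_j\le p_k$'' is equivalent to ``$p_i\vee p_j\le p_k$'', because $p_i\vee p_j$ is the smallest upper bound of $p_i$ and $p_j$. Hence the entry becomes
\begin{align*}
(\mathbf{E}\mathbf{G}\mathbf{E}^T)_{ij}=\sum_{k\colon p_i\vee p_j\le p_k} g(p_k)=\sum_{p_i\vee p_j\le p_k} \zeta_P(p_i\vee p_j,p_k)\,g(p_k).
\end{align*}
I would then invoke the hypothesis $\fvec=\E\gvec$, which written componentwise states $f(p_i)=\sum_k \zeta_P(p_i,p_k)g(p_k)$ for every $i$; applying this with the index corresponding to the element $p_i\vee p_j$ identifies the sum above as $f(p_i\vee p_j)=f_{ij}$, the $(i,j)$-entry of $\mathbf{F}$. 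This would complete the comparison and establish the identity.

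The main obstacle I anticipate is a bookkeeping subtlety rather than a conceptual one: the relation $\fvec=\E\gvec$ is indexed over the elements $p_1,\dots,p_n$, and to apply it at $p_i\vee p_j$ I must know that $p_i\vee p_j$ is itself one of the listed elements of $P$, which it is since $P$ is an upper semilattice and therefore closed under $\vee$. I would also want to double-check that the ordering assumption $p_i\le p_j \Rightarrow i\le j$ makes $\E$ upper triangular as claimed, though this compatibility is needed only to justify the \emph{form} of $\E$ and plays no role in the entrywise computation itself. Care is needed to keep the two occurrences of $\zeta_P$ and the diagonal nature of $\mathbf{G}$ straight when expanding the triple product, but once the common-upper-bound set is rewritten via the join, the argument closes immediately.
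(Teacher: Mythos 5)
Your proof is correct and complete. Note that the paper itself does not prove this theorem at all --- it is stated as a cited result of Wilf~\cite{Wilf1968} --- so there is no internal proof to compare against; your argument is the standard one for such join-matrix factorisations. The entrywise computation $(\E\G\E^T)_{ij}=\sum_{k}\zeta_P(p_i,p_k)\zeta_P(p_j,p_k)g(p_k)$, the observation that $p_i\le p_k$ and $p_j\le p_k$ hold simultaneously if and only if $p_i\vee p_j\le p_k$ (the defining property of the join in an upper semilattice), and the application of the hypothesis $\fvec=\E\gvec$ at the element $p_i\vee p_j$ --- legitimate precisely because $P$ is closed under $\vee$, as you point out --- together close the argument with no gaps. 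Your side remark is also accurate: the indexing assumption $p_i\le p_j\Rightarrow i\le j$ only guarantees that $\E$ is upper triangular (which matters later in the paper, e.g.\ for Theorem~\ref{thm:inverseWilf} and Remark~\ref{remark:moebInv}), and is irrelevant to the identity $\F=\E\G\E^T$ itself.
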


\begin{Remark}\label{remark:moebInv}
Assume that the conditions of Theorem~\ref{thm:wilf} are given. We can then compute
the vector $\gvec$ from the vector $\fvec$ by
\begin{align}
\gvec&=\E^{-1}\fvec{},
\end{align}
which is the \emph{Möbius inversion principle}.
Observe that the entries $r_{ij}$ of the matrix $\E^{-1}=(r_{ij})$ satisfy $r_{ij}=\mu_P(p_i,p_j)$.
\end{Remark}

\begin{Definition}\label{def:0selection}
Assume that the conditions of Theorem~\ref{thm:wilf} are satisfied and 
define the subset $P_0\subseteq P$ by
\begin{align}
P_0&=\{p\in P\colon g(p)\ne 0\}.
\end{align}
Furthermore denote by $\F_0$, $\E_0$ and $\G_0$ the matrices emerging form $\F$, $\E$ and $\G$ by the removal of all 
columns and rows, that are not corresponding to elements in $P_0$. 
In general we denote by the bracket notation $[\cdot]_0$ the removal of all 
rows and columns not corresponding to elements in $P_0$.
\end{Definition}

\begin{Theorem}\label{thm:inverseWilf}
Assume that the conditions of Theorem~\ref{thm:wilf} are satisfied. 
Then the inverse of $\F_0$ exists.
\end{Theorem}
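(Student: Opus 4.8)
The plan is to exploit the factorisation $\F=\E\G\E^T$ supplied by Theorem~\ref{thm:wilf} and to show that it descends to a clean factorisation of the restricted matrix $\F_0$. First I would write out a single entry of the triple product,
\begin{align*}
f_{ij}=(\E\G\E^T)_{ij}=\sum_{k=1}^n \zeta_P(p_i,p_k)\,g(p_k)\,\zeta_P(p_j,p_k),
\end{align*}
using that $\G$ is diagonal with diagonal entry $g(p_k)$. By the definition of $P_0$ the factor $g(p_k)$ vanishes precisely when $p_k\notin P_0$, so every summand with $p_k\notin P_0$ drops out and only indices $k$ with $p_k\in P_0$ contribute. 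Restricting the row index $i$ and the column index $j$ to $P_0$ via $[\cdot]_0$ then leaves exactly
\begin{align*}
\F_0=\E_0\,\G_0\,\E_0^T,
\end{align*}
which is the heart of the argument: the deletion of the rows and columns outside $P_0$ can be pushed through the product because the middle indices that are removed were already annihilated by the zero entries of $\G$.

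It then remains to check that each of the three factors on the right is invertible. The matrix $\E$ is upper triangular with $\zeta_P(p_i,p_i)=1$ along the diagonal, so $\det\E=1$. Because the enumeration satisfies $p_i\le_P p_j\Rightarrow i\le j$, deleting a common set of rows and columns preserves the upper triangular shape, whence $\E_0$ is again upper unitriangular and thus invertible. The matrix $\G_0$ is diagonal with entries $g(p_k)$ ranging over $p_k\in P_0$; by the very definition of $P_0$ these are all nonzero, so $\det\G_0=\prod_{p_k\in P_0} g(p_k)\ne 0$ and $\G_0$ is invertible. Consequently $\F_0$ is a product of three invertible matrices and is therefore itself invertible.

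I expect the main obstacle to be the careful verification of the identity $\F_0=\E_0\G_0\E_0^T$, that is, justifying that the restriction operation $[\cdot]_0$ commutes with the triple product. The subtle point is that it is not enough to know $\G$ is diagonal: what makes the reduction valid is that the zeros of $\G$ sit \emph{exactly} at the indices being removed, so an index $k\notin P_0$ contributes nothing to any entry $f_{ij}$ regardless of whether $i$ and $j$ themselves lie in $P_0$. Once this bookkeeping is made precise, the invertibility of the two outer unitriangular factors and of the diagonal factor with nonzero entries is routine, and the theorem follows.
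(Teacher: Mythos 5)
Your proposal is correct and follows essentially the same route as the paper: both derive $\F_0=\E_0\G_0\E_0^T$ by noting that the vanishing diagonal entries of $\G$ annihilate every summand indexed outside $P_0$, and then conclude invertibility from the unitriangularity of $\E_0$ (the Zeta-matrix of the subposet $P_0$) and the non-vanishing diagonal of $\G_0$. Your explicit remark that the zeros of $\G$ sit exactly at the removed indices is precisely the bookkeeping the paper's proof relies on, just stated more carefully.
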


\begin{proof}
By Theorem~\ref{thm:wilf} we have $\F=\E\G\E^T$ and therefore 
\begin{align*}
f_{ij}&=\sum_{k=1}^n e_{ik}g_{kk}e_{jk}=
\sum_{
k:p_k\in P_0
} e_{ik}g_{kk}e_{jk},
\end{align*}
for all $p_i,p_j\in P_0$, which gives $\F_0=\E_0\G_0\E_0^T$. 
Observe now that the inverses of $\E_0$ and $\G_0$ exist, as $\E_0$ represents the Zeta-function of the 
subposet $P_0$ and $\G_0$ is a diagonal matrix with non-vanishing diagonal elements. 
\end{proof}

\begin{Remark}
\label{def:thetafunction}
First observe that the set $\Pi_l(X,\pi_X)$ is a finite upper semilattice.
In the following we apply Theorem~\ref{thm:inverseWilf} to the transfer matrix $\M$ and the 
function $g$ in Theorem~\ref{thm:wilf} is denoted by $\lambda$. Hence we have 
by Remark~\ref{remark:moebInv}
\begin{align}
\lambda(\pi)&=\sum_{
\substack{
\sigma\in\Pi_l(X,\pi_X)\\
\sigma\ge\pi
}
} \mu_{\Pi_l(X)}(\pi,\sigma)m(\sigma)
\end{align}
for all $\pi\in\Pi_l(X,\pi_X)$.
\end{Remark}

\begin{Definition}
Let $(G^1,G^2,X)$ be a $K$-splitting of $(G,K)$.
Denote by $\T$ the diagonal matrix with entries $\lambda_{\pi,\pi}=\lambda(\pi)$ 
for all $\pi\in\Pi_l(X,\pi_X)$ and by $\Z$ the matrix with entries $z_{\pi,\sigma}=[\pi\le\sigma]$ 
for all $\pi,\sigma\in\Pi_l(X,\pi_X)$.
\label{def:thetaMatrix}
\label{def:zetaMatrix}
\end{Definition}

\begin{Corollary}\label{cor:factoringM}
We have
\begin{align*}
\M&=\Z\T\Z^T \quad \text{ and } \quad \M_0=\Z_0\T_0\Z_0^T,
\end{align*}
where $\M_0^{-1}$ always exists. 
\end{Corollary}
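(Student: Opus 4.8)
The plan is to recognise Corollary~\ref{cor:factoringM} as a direct instance of Wilf's factorisation (Theorem~\ref{thm:wilf}) together with its refinement in Theorem~\ref{thm:inverseWilf}, applied to the upper semilattice $P=\Pi_l(X,\pi_X)$ identified in Remark~\ref{def:thetafunction}. The whole argument amounts to setting up the correct dictionary between the abstract objects of Theorem~\ref{thm:wilf} and the concrete matrices of Definition~\ref{def:thetaMatrix}. Concretely I would take $f=m$ and $g=\lambda$, order the elements of $\Pi_l(X,\pi_X)$ by a linear extension of $\le$ so that $\pi\le\sigma$ forces the index of $\pi$ to precede that of $\sigma$, and observe that under this ordering the Zeta-matrix $\E$ of $P$ is exactly $\Z$ (its entries are $z_{\pi,\sigma}=[\pi\le\sigma]=\zeta_P(\pi,\sigma)$), the diagonal matrix $\G$ is exactly $\T$ (its entries are $\lambda(\pi)$), and the join matrix $\F$ is exactly $\M$ (its entries are $m(\pi\vee\sigma)$). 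With this identification the first claim $\M=\Z\T\Z^T$ is literally the conclusion $\F=\E\G\E^T$ of Theorem~\ref{thm:wilf}.

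Before invoking Theorem~\ref{thm:wilf}, its hypothesis $\fvec=\E\gvec$ must be checked, which here reads $m(\pi)=\sum_{\sigma\ge\pi}\lambda(\sigma)$. This is precisely the Möbius inversion of the defining formula for $\lambda$ in Remark~\ref{def:thetafunction}, so by Remark~\ref{remark:moebInv} the hypothesis holds as soon as the Möbius function $\mu_{\Pi_l(X)}$ appearing there agrees on $P$ with the Möbius function $\mu_P$ of the subposet $P=\Pi_l(X,\pi_X)$ that is used to invert $\Z$. This is the one point that deserves care, and I expect it to be the main (if modest) obstacle: I would verify that $\Pi_l(X,\pi_X)$ is an order filter in $\Pi_l(X)$, i.e. that the property of satisfying $\pi\ge\pi_X$ and having at least one labelled block is preserved under passing to any $\sigma\ge\pi$. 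Since merging blocks never destroys a labelled block, every such $\sigma$ again lies in $\Pi_l(X,\pi_X)$; consequently each interval $[\pi,\sigma]$ with $\pi,\sigma\in\Pi_l(X,\pi_X)$ is entirely contained in $\Pi_l(X,\pi_X)$, and the two Möbius functions coincide on it. The same observation shows $\pi\vee\sigma\in\Pi_l(X,\pi_X)$, so that $\M$ really is a join matrix over $P$ and Theorem~\ref{thm:wilf} applies.

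For the second claim I would apply Theorem~\ref{thm:inverseWilf} with the selection set $P_0=\{\pi\in\Pi_l(X,\pi_X)\colon\lambda(\pi)\ne 0\}$ of Definition~\ref{def:0selection}. That theorem directly yields the factorisation $\M_0=\Z_0\T_0\Z_0^T$, and furthermore that $\Z_0$ and $\T_0$ are invertible: $\Z_0$ is the Zeta-matrix of the subposet $P_0$, hence unitriangular in the chosen linear extension, and $\T_0$ is diagonal with entries $\lambda(\pi)\ne 0$ by the very definition of $P_0$. Writing $\M_0$ as the product $\Z_0\T_0\Z_0^T$ of three invertible matrices then shows that $\M_0^{-1}=(\Z_0^T)^{-1}\T_0^{-1}\Z_0^{-1}$ exists unconditionally, which is the final assertion.
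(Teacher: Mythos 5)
Your proposal is correct and follows exactly the route the paper intends: Corollary~\ref{cor:factoringM} is stated without proof as a direct instance of Theorems~\ref{thm:wilf} and~\ref{thm:inverseWilf} applied to $P=\Pi_l(X,\pi_X)$ with $f=m$, $g=\lambda$, $\E=\Z$, $\G=\T$, $\F=\M$, precisely the dictionary you set up. Your extra verification that $\Pi_l(X,\pi_X)$ is an order filter of $\Pi_l(X)$ (so that $\mu_{\Pi_l(X)}$ agrees with $\mu_P$ and joins are inherited) fills in a detail the paper silently assumes, and is correct.
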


\begin{Definition}
Let $X$ be an $n$-element set and assume that $\pi_X$ has $k$ labelled and $n-k$ unlabelled blocks.
Denote now by $\Pi_l(X,\pi_X)_0$ the set
\begin{align}
\Pi_l(X,\pi_X)_0&=\{\pi\in\Pi_l(X,\pi_X):\lambda(\pi)\ne 0\}
\end{align}
and the number of elements in $\Pi_l(X,\pi_X)_0$ by $P_0(n,k)$.
\end{Definition}

\begin{Corollary}\label{cor:invertibility}
The matrix $\M$ is invertible if and only if $P(n,k)=P_0(n,k)$ is satisfied. In this case we have $\M=\M_0$.
\end{Corollary}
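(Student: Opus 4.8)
The plan is to read off invertibility directly from the factorisation $\M=\Z\T\Z^T$ supplied by Corollary~\ref{cor:factoringM}, the point being that the two outer factors are always invertible, so that the invertibility of $\M$ is governed entirely by the diagonal matrix $\T$.

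First I would record that $\Z$ is precisely the Zeta-matrix of the finite upper semilattice $\Pi_l(X,\pi_X)$ in the sense of Theorem~\ref{thm:wilf}. Listing the elements of $\Pi_l(X,\pi_X)$ along a linear extension of the order, so that $\pi\le\sigma$ forces the index of $\pi$ not to exceed that of $\sigma$, the entry $z_{\pi,\sigma}=[\pi\le\sigma]$ vanishes strictly below the diagonal and equals $1$ on it. Hence $\Z$ is upper unitriangular, so that $\det\Z=1$ and both $\Z$ and $\Z^T$ are invertible (note that a simultaneous permutation of rows and columns does not affect this, so the ordering is a mere convenience).

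Then, since $\Z$ and $\Z^T$ are invertible square matrices of the same format, $\M=\Z\T\Z^T$ is invertible if and only if $\T$ is invertible; equivalently $\det\M=\det\Z\,\det\T\,\det\Z^T=\prod_{\pi\in\Pi_l(X,\pi_X)}\lambda(\pi)$. Being diagonal with entries $\lambda_{\pi,\pi}=\lambda(\pi)$, the matrix $\T$ is invertible exactly when $\lambda(\pi)\ne 0$ for every $\pi\in\Pi_l(X,\pi_X)$, that is, when $\Pi_l(X,\pi_X)_0=\Pi_l(X,\pi_X)$. Comparing cardinalities of a set with its own subset, this last equality holds if and only if $P_0(n,k)=P(n,k)$, which is exactly the stated criterion.

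Finally, I would dispatch the supplementary claim: if $P(n,k)=P_0(n,k)$, then the deletion of the rows and columns indexed by $\{\pi:\lambda(\pi)=0\}$ that turns $\M$ into $\M_0$ removes nothing, whence $\M=\M_0$. Given the factorisation the argument is essentially formal, and the only step demanding genuine care is the triangularity of $\Z$ underpinning its invertibility; the passage from the nonvanishing of all $\lambda(\pi)$ to the counting identity is immediate.
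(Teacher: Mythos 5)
Your proof is correct and takes exactly the route the paper intends: Corollary~\ref{cor:invertibility} is stated there without proof as an immediate consequence of the factorisation $\M=\Z\T\Z^T$ from Corollary~\ref{cor:factoringM}, and your argument (unitriangularity of $\Z$ under a linear extension, hence invertibility of $\M$ equivalent to invertibility of the diagonal matrix $\T$, i.e.\ to $\lambda(\pi)\ne 0$ for all $\pi$, and then the passage from the inclusion $\Pi_l(X,\pi_X)_0\subseteq\Pi_l(X,\pi_X)$ to the cardinality condition $P_0(n,k)=P(n,k)$) is precisely the omitted verification. No gaps.
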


\begin{Definition}
Let $\pi\in\Pi_l(X,\pi_X)$ and denote by $\pi^*\in\Pi_l(X,\pi_X)$ the smallest labelled set partition with at most 
one labelled block, so that $\pi^*\ge\pi$. In other words $\pi^*$ emerges from $\pi$ by the union of all
labelled blocks in $\pi$.
\end{Definition}

\begin{Theorem}
The set $\Pi(X,\pi_X)_0$ consists of all labelled set partitions $\pi$ in $\Pi_l(X,\pi_X)$ with at most one unlabelled block.
We have even more  
\begin{align}
\lambda(\pi)&=\mu_{\Pi_l(X)}(\pi, \pi^*)
\end{align}
for all $\pi\in\Pi(X,\pi_X)_0$.
\end{Theorem}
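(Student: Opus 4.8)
The plan is to evaluate the defining sum
\[
\lambda(\pi)=\sum_{\substack{\sigma\in\Pi_l(X,\pi_X)\\ \sigma\ge\pi}}\mu_{\Pi_l(X)}(\pi,\sigma)\,m(\sigma)
\]
by determining exactly which summands survive. Write $\pi=B_1L_1|\ldots|B_kL_k$, let $a\ge 1$ be the number of labelled and $b$ the number of unlabelled blocks of $\pi$, and set $C=\bigcup_{L_i=\{l\}}B_i$. Since $m(\sigma)\neq 0$ only when $\sigma$ has a single labelled block, it suffices to consider $\sigma$ of the form $\sigma=E_0l\,|\,E_1|\ldots|E_r$ with each $E_j$ a union of blocks of $\pi$; because $\sigma\ge\pi$ forces every labelled block of $\pi$ into the unique labelled block of $\sigma$, the set $E_0$ contains $C$. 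For such a $\sigma$ Propositions~\ref{prop:product} and~\ref{prop:productmoebius} give the factorisation
\[
\mu_{\Pi_l(X)}(\pi,\sigma)=\mu_{\Pi_l(E_0)}(\pi\sqcap E_0,E_0l)\prod_{j\ge 1}\mu_{\Pi_l(E_j)}(\pi\sqcap E_j,E_j).
\]

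The decisive step is to apply Lemma~\ref{lem:unevenlyLabelled} to the labelled factor: if $E_0$ were to absorb even one unlabelled block of $\pi$, then $\pi\sqcap E_0$ would carry both a labelled and an unlabelled block while $E_0l$ is the maximum of $\Pi_l(E_0)$, forcing $\mu_{\Pi_l(E_0)}(\pi\sqcap E_0,E_0l)=0$. Hence only those $\sigma$ with $E_0=C$ contribute, and for these the labelled factor equals $\tilde\mu_a$ by the interval isomorphism already used in Theorem~\ref{thm:keyTheoremMoebiusFunction}, whereas each remaining factor is an ordinary partition-lattice Möbius value. The surviving $\sigma$ are thus in bijection with the partitions $\rho$ of the $b$ unlabelled blocks of $\pi$, and the product formula reads $\mu_{\Pi_l(X)}(\pi,\sigma)=\tilde\mu_a\,\mu_{\Pi(\{1,\ldots,b\})}(\hat{0},\rho)$. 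Summing over all $\rho$ and invoking the defining property of the Möbius function collapses the sum to
\[
\lambda(\pi)=\tilde\mu_a\sum_{\rho\in\Pi(\{1,\ldots,b\})}\mu_{\Pi(\{1,\ldots,b\})}(\hat{0},\rho)=\tilde\mu_a\,\delta_{\Pi(\{1,\ldots,b\})}(\hat{0},\hat{1}).
\]

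The partition lattice on $b$ points satisfies $\hat{0}=\hat{1}$ precisely when $b\le 1$, so the Kronecker delta equals $1$ for $b\in\{0,1\}$ and vanishes for $b\ge 2$; as $\tilde\mu_a=(-1)^{a-1}(a-1)!\neq 0$ by Theorem~\ref{thm:keyTheoremMoebiusFunction}, this at once characterises $\Pi_l(X,\pi_X)_0$ as the labelled set partitions with at most one unlabelled block and shows $\lambda(\pi)=\tilde\mu_a$ there. To finish I decompose $[\pi,\pi^*]$ by Proposition~\ref{prop:product}: its unlabelled factors are single points and its one labelled factor is again $\tilde\mu_a$, so that $\mu_{\Pi_l(X)}(\pi,\pi^*)=\tilde\mu_a$, which yields $\lambda(\pi)=\mu_{\Pi_l(X)}(\pi,\pi^*)$ on $\Pi_l(X,\pi_X)_0$. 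I expect the main obstacle to be the bookkeeping in the factorisation, in particular confirming rigorously that the constraint $m(\sigma)=1$ together with a nonvanishing labelled factor pins $E_0$ down to exactly $C$; once that is established, the remainder is a telescoping of partition-lattice Möbius values.
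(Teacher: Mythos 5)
Your proof is correct and follows essentially the same route as the paper: both factor $\mu_{\Pi_l(X)}(\pi,\sigma)$ over the blocks of $\sigma$ via Propositions~\ref{prop:product} and~\ref{prop:productmoebius}, use Lemma~\ref{lem:unevenlyLabelled} to annihilate any labelled block of $\sigma$ that mixes labelled and unlabelled blocks of $\pi$, and collapse the remaining sum over coarsenings of the unlabelled part to a delta via $\sum_{\omega\ge\tau}\mu(\tau,\omega)=\delta(\tau,\hat{1})$, which simultaneously yields the characterisation of $\Pi_l(X,\pi_X)_0$ and the identity $\lambda(\pi)=\mu_{\Pi_l(X)}(\pi,\pi^*)$. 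A minor point in your favour: by imposing $m(\sigma)\neq 0$ first and only then pinning the unique labelled block down to $E_0=C$ with the Lemma, you sidestep a small imprecision in the paper's proof, which asserts that $\mu_{\Pi_l(X)}(\pi,\sigma)\neq 0$ alone forces the blocks of $\sigma$ on the unlabelled part to be unlabelled --- in fact a labelled block of $\sigma$ consisting solely of unlabelled blocks of $\pi$ has a non-vanishing M\"obius factor, and such terms are eliminated by $m$, exactly as your ordering makes explicit.
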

\begin{proof}
Partition $\pi$ by setting $\pi=\rho\cup\tau$, where $\rho\in\Pi_l(L)$ and $\tau\in\Pi(U)$ 
are consisting of all labelled and all unlabelled blocks of $\pi$, respectively. 
Hence we have $L\cup U=X$ and $L\cap U=\emptyset$. It is by definition
\begin{align*}
	\lambda(\pi)&=\sum_{\sigma\in\Pi_l(X)}\mu(\pi,\sigma)m(\sigma).
\end{align*}
Now the condition $\mu_{\Pi_l(X)}(\pi,\sigma)\ne 0$ implies by Lemma~\ref{lem:unevenlyLabelled} that $\sigma\ge \pi$
has the form $\sigma=\varepsilon\cup\omega$ where $\varepsilon\in\Pi_l(L)$ and 
$\tau\in\Pi(U)$.

Therefore we can write
\begin{align*}
	\lambda(\pi)&=\sum_{
	\substack{
		\varepsilon\in\Pi_l(L)\\
		\omega\in\Pi(U)
	}}
	\mu_{\Pi_l(X)}(\rho\cup\tau,\varepsilon\cup\omega)m(\varepsilon\cup\omega) \\
	&=\sum_{\varepsilon\in\Pi_l(L)}
	\sum_{\omega\in\Pi(U)}
	\mu_{\Pi_l(L)}(\rho,\varepsilon)
	\mu_{\Pi(U)}(\tau,\omega)m(\varepsilon\cup\omega),
\end{align*}
where the last line follows by the application of Proposition~\ref{prop:productmoebius}. Note that 
$m(\varepsilon\cup\omega)=1$  if and only if $\varepsilon=\rho^*$ for all $\varepsilon\in\Pi_l(L)$ and $\omega\in\Pi(U)$.

Hence
\begin{align*}
	\lambda(\pi)&=
	\mu_{\Pi_l(L)}(\rho,\rho^*)
	\sum_{\omega\in\Pi(U)}
	\mu_{\Pi(U)}(\tau,\omega) \\
	&=\mu_{\Pi_l(L)}(\rho,\rho^*)\delta_{\Pi(U)}(\tau,\hat{1}),
\end{align*}
where the last line follows from the definition of the Möbius function. This 
proves the claim, as $\mu_{\Pi_l(L)}(\rho,\rho^*)$ is
due to Theorem~\ref{thm:keyTheoremMoebiusFunction} non-vanishing.
\end{proof}

\begin{Theorem}
The numbers $P_0(n,k)$ are given by
\begin{align}
P_0(n,0)&=\sum_{j=0}^{n-1}\binom{n}{j}B(n-j)=B(n+1)-1, \\
P_0(n,k)&=\sum_{j=0}^{n-k}\binom{n-k}{j}B(n-j)
\end{align}
for $n,k\ge 1$.
\end{Theorem}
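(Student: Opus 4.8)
The plan is to invoke the preceding theorem, which characterises $\Pi_l(X,\pi_X)_0$ as the set of labelled set partitions $\pi\ge\pi_X$ possessing at least one labelled and at most one unlabelled block, and then to count these partitions directly by conditioning on the number of unlabelled blocks, which is either $0$ or $1$. First I would record the key consequence of $\pi\ge\pi_X$: every block of $\pi$ containing a terminal vertex must be labelled, so any unlabelled block is necessarily composed entirely of the $n-k$ non-terminal vertices.

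For the subcase of no unlabelled block, every block of $\pi$ is labelled, so $\pi$ arises by choosing an arbitrary set partition of the $n$-element set $X$ and labelling all of its blocks; this yields $B(n)$ partitions, and the requirement of at least one labelled block is automatic since $X$ is non-empty. For the subcase of exactly one unlabelled block, I would select this block as a non-empty subset of size $j$ among the $n-k$ non-terminal vertices in $\binom{n-k}{j}$ ways and then partition the remaining $n-j$ vertices into labelled blocks in $B(n-j)$ ways.

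When $k\ge 1$ the index runs over $1\le j\le n-k$, and the at-least-one-labelled-block condition holds automatically because the $k$ terminal vertices are forced into labelled blocks; absorbing the no-unlabelled-block count as the $j=0$ summand then gives $P_0(n,k)=\sum_{j=0}^{n-k}\binom{n-k}{j}B(n-j)$. When $k=0$ the unlabelled block may be drawn from all $n$ vertices, but to guarantee at least one labelled block I must additionally exclude $j=n$; combining this with the $j=0$ term yields $P_0(n,0)=\sum_{j=0}^{n-1}\binom{n}{j}B(n-j)$.

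The closed form $P_0(n,0)=B(n+1)-1$ then follows from the Bell-number recurrence: the substitution $m=n-j$ rewrites the sum as $\sum_{m=1}^{n}\binom{n}{m}B(m)$, and since $B(n+1)=\sum_{m=0}^{n}\binom{n}{m}B(m)$ with $B(0)=1$, subtracting the $m=0$ term establishes the identity. I expect the only real subtlety to be the bookkeeping of the labelling constraint, namely explaining why the unlabelled block draws from non-terminal vertices alone (hence $\binom{n-k}{j}$ rather than $\binom{n}{j}$), and why the at-least-one-labelled-block requirement is free of charge when $k\ge 1$ but must be imposed by hand, through the exclusion of $j=n$, when $k=0$.
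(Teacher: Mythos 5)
Your proof is correct and follows essentially the same approach as the paper: invoking the characterisation of $\Pi_l(X,\pi_X)_0$ from the preceding theorem, choosing the $j$ non-terminal vertices that form the (possibly empty) unlabelled block in $\binom{n-k}{j}$ ways, partitioning the remaining $n-j$ elements into labelled blocks in $B(n-j)$ ways, and imposing $j\le n-1$ only when $k=0$. You are in fact slightly more explicit than the paper, which leaves both the observation that unlabelled blocks consist only of non-terminal vertices and the verification of the closed form $B(n+1)-1$ via the Bell recurrence to the reader.
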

\begin{proof}
Assume that $\pi_X$ is a labelled set partition with $k$ labelled and $n-k$ unlabelled blocks. 
We count all labelled set partitions $\sigma\ge \pi_X$ with at most one unlabelled block 
and at least one labelled block.

Now we can choose in 
$\binom{n-k}{j}$ ways a possibly empty set of the unlabelled blocks in $\pi_X$, that contribute to the possibly non-existing $(j=0)$
unlabelled block in the labelled set partition $\sigma$. Furthermore we can partition the remaining $n-j$ elements in $B(n-j)$ different
ways, which gives the contribution of the labelled blocks of $\sigma$. Observe that we have to ensure in the case $k=0$,
that there is at least one labelled block after all, which gives the condition $j\le n-1$ in the sum of $P_0(n,0)$.
\end{proof}

\section{The Splitting Formula\label{sec:splittingFormula}}
This section states Theorem~\ref{thm:SymmetricSplittingFormula}, 
which is the centrepiece of this article. 
\begin{Lemma}\label{lem:pSplittingReduced}
Let $(G^1,G^2,X)$ be a $K$-splitting of $(G,K)$. Then
\begin{align}
R(G,K)&=\left[\Z^T\pvec(G^1)\right]_0^T
\T_0
\left[
\Z^T\pvec(G^2)
\right]_0.
\end{align}
\end{Lemma}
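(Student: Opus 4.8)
The plan is to begin from the matrix form of the splitting formula already established in Theorem~\ref{thm:PsplittingFormula}, substitute the factorisation of $\M$, and then exploit the diagonal structure of $\T$ to collapse the resulting quadratic form onto the index set $\Pi_l(X,\pi_X)_0$. First I would invoke Theorem~\ref{thm:PsplittingFormula}, which gives
\begin{align*}
R(G,K)&=\pvec(G^1)^T\M\pvec(G^2).
\end{align*}
Next I substitute the factorisation $\M=\Z\T\Z^T$ supplied by Corollary~\ref{cor:factoringM} and regroup the factors. Using $\pvec(G^1)^T\Z=\left(\Z^T\pvec(G^1)\right)^T$ together with associativity of matrix multiplication, this rewrites as
\begin{align*}
R(G,K)&=\left(\Z^T\pvec(G^1)\right)^T\T\left(\Z^T\pvec(G^2)\right).
\end{align*}

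The key step is then to use that $\T$ is the diagonal matrix with entries $\lambda(\pi)$. Writing $\mathbf{a}=\Z^T\pvec(G^1)$ and $\mathbf{b}=\Z^T\pvec(G^2)$ for brevity, the right-hand side expands to the single sum $\sum_{\pi\in\Pi_l(X,\pi_X)}a_\pi\,\lambda(\pi)\,b_\pi$, because only the diagonal of $\T$ survives. By the definition of $\Pi_l(X,\pi_X)_0$ we have $\lambda(\pi)=0$ precisely when $\pi\notin\Pi_l(X,\pi_X)_0$, so every term indexed outside this set vanishes and the sum restricts to $\sum_{\pi\in\Pi_l(X,\pi_X)_0}a_\pi\,\lambda(\pi)\,b_\pi$. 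This restricted sum is exactly the quadratic form $[\mathbf{a}]_0^T\T_0[\mathbf{b}]_0$ in the bracket notation of Definition~\ref{def:0selection}, which is the claimed identity.

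There is essentially no analytic obstacle here; the entire argument is bookkeeping, and the genuine content, namely the factorisation $\M=\Z\T\Z^T$, has already been provided by Corollary~\ref{cor:factoringM}. The one point I would check carefully is that deleting the rows and columns outside $\Pi_l(X,\pi_X)_0$ leaves the value of the form unchanged: since $\T$ is diagonal, each entry it contributes outside $\Pi_l(X,\pi_X)_0$ is multiplied by $\lambda(\pi)=0$, so applying the restriction $[\cdot]_0$ simultaneously to $\mathbf{a}$, $\T$ and $\mathbf{b}$ removes only terms that were already zero and therefore preserves the form. Confirming this simultaneity of the restriction on all three factors is the only place where a sign or indexing slip could creep in, so that is where I would be most careful.
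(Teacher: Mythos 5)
Your proposal is correct and follows exactly the paper's own argument: invoke Theorem~\ref{thm:PsplittingFormula}, substitute the factorisation $\M=\Z\T\Z^T$ from Corollary~\ref{cor:factoringM}, and drop the terms annihilated by the vanishing diagonal entries of $\T$. Your explicit expansion of the quadratic form as $\sum_{\pi}a_\pi\lambda(\pi)b_\pi$ merely spells out the step the paper summarises as ``considering only the non-vanishing elements of the diagonal matrix $\T$,'' so the two proofs coincide.
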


\begin{proof}
By Theorem~\ref{thm:PsplittingFormula} we have 
\begin{align*}
R(G,K)&=\pvec(G^1)^T\M\pvec(G^2)
\end{align*}
and the factorisation of the $\M$ matrix yields
\begin{align*}
R(G,K)&=
\pvec(G^1)^T\Z\T\Z^T\pvec(G^2).
\end{align*}
Considering only the non-vanishing elements of the diagonal matrix $\T$ gives then
\begin{align*}
R(G,K)&=
\left[
\Z^T\pvec(G^1)
\right]_0^T
\T_0
\left[
\Z^T\pvec(G^2)
\right]_0. \qedhere
\end{align*}
\end{proof}

\begin{Lemma}\label{lem:rpReduced}
Let $(G^1,G^2,X)$ be a $K$-splitting of $(G,K)$. Then
\begin{align}
\left[\Z^T\pvec(G^i)\right]_0&=
\T_0^{-1}\Z^{-1}_0\rvec_0(G^i) \quad \text{for }i=1,2.
\end{align}
\end{Lemma}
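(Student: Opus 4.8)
The plan is to start from the column formula in Theorem~\ref{thm:cformula}, substitute the factorisation of the transfer matrix supplied by Corollary~\ref{cor:factoringM}, and then exploit the fact that $\T$ annihilates every coordinate outside $\Pi_l(X,\pi_X)_0$ in order to pass cleanly to the reduced matrices.

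Concretely, first I would write $\rvec(G^i)=\M\pvec(G^i)$ and replace $\M$ by $\Z\T\Z^T$, obtaining $\rvec(G^i)=\Z\T\Z^T\pvec(G^i)$. Setting $\mathbf{u}:=\T\Z^T\pvec(G^i)$, the entry of $\mathbf{u}$ indexed by $\pi$ equals $\lambda(\pi)\,(\Z^T\pvec(G^i))_\pi$, which vanishes whenever $\pi\notin\Pi_l(X,\pi_X)_0$, since $\T$ is diagonal with $\lambda(\pi)=0$ off $\Pi_l(X,\pi_X)_0$. Hence $\mathbf{u}$ is supported on $\Pi_l(X,\pi_X)_0$.

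The decisive observation, and the only delicate point, is that restricting a matrix product to $\Pi_l(X,\pi_X)_0$ does not in general factor as the product of the restrictions, yet it does so here precisely because $\mathbf{u}$ is supported on $\Pi_l(X,\pi_X)_0$. Indeed, for $\pi\in\Pi_l(X,\pi_X)_0$ one has $(\Z\mathbf{u})_\pi=\sum_{\sigma\ge\pi}u_\sigma=\sum_{\sigma\ge\pi,\ \sigma\in\Pi_l(X,\pi_X)_0}u_\sigma=(\Z_0[\mathbf{u}]_0)_\pi$, so that $\rvec_0(G^i)=[\Z\mathbf{u}]_0=\Z_0[\mathbf{u}]_0$. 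Because $\T$ is diagonal we also have $[\mathbf{u}]_0=\T_0[\Z^T\pvec(G^i)]_0$, and combining these identities gives $\rvec_0(G^i)=\Z_0\T_0[\Z^T\pvec(G^i)]_0$.

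Finally I would invert. By Corollary~\ref{cor:factoringM}, whose proof rests on Theorem~\ref{thm:inverseWilf}, the matrix $\Z_0$ is invertible, as it represents the Zeta-function of the subposet $\Pi_l(X,\pi_X)_0$, while $\T_0$ is invertible since it is diagonal with non-vanishing entries by the very construction of $\Pi_l(X,\pi_X)_0$. Multiplying $\rvec_0(G^i)=\Z_0\T_0[\Z^T\pvec(G^i)]_0$ on the left by $\T_0^{-1}\Z_0^{-1}$ then yields the claimed identity $[\Z^T\pvec(G^i)]_0=\T_0^{-1}\Z_0^{-1}\rvec_0(G^i)$. The main obstacle is exactly the support argument that legitimises replacing $\Z$ by $\Z_0$ under the bracket $[\cdot]_0$; once that is justified, the remaining manipulations are routine linear algebra.
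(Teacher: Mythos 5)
Your proof is correct and takes essentially the same route as the paper: start from Theorem~\ref{thm:cformula}, substitute the factorisation $\M=\Z\T\Z^T$ of Corollary~\ref{cor:factoringM}, use the fact that $\T$ annihilates every coordinate outside $\Pi_l(X,\pi_X)_0$, and finish by inverting the reduced matrices $\Z_0$ and $\T_0$. The only organisational difference is that the paper first multiplies by the full inverse to obtain $\T\Z^T\pvec(G^i)=\Z^{-1}\rvec(G^i)$, restricts rows, and then asserts the identity $\left[\Z^{-1}\rvec(G^i)\right]_0=\Z_0^{-1}\rvec_0(G^i)$ without justification, whereas your support argument for $\mathbf{u}=\T\Z^T\pvec(G^i)$ --- showing $[\Z\mathbf{u}]_0=\Z_0[\mathbf{u}]_0$ precisely because $\mathbf{u}$ vanishes off $\Pi_l(X,\pi_X)_0$ --- supplies exactly the justification the paper omits, so your writeup is, if anything, more complete than the paper's own proof.
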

\begin{proof}
By Theorem~\ref{thm:cformula} we have the equation
\begin{align*}
\M \pvec(G^i)&=\rvec(G^i)
\end{align*}
and the factorisation of the $\M$ matrix yields
\begin{align*}
\T \Z^T\pvec(G^i)&=\Z^{-1}\rvec(G^i).
\end{align*}
Considering only the non-vanishing elements of the diagonal matrix $\T$ gives
\begin{align*}
\T_0 \left[\Z^T\pvec(G^i)\right]_0&=\left[\Z^{-1}\rvec(G^i)\right]_0.
\end{align*}
Note that we have the equality 
\begin{align*}
\left[\Z^{-1}\rvec(G^i)\right]_0=\Z^{-1}_0\rvec_0(G^i).
\end{align*}
Hence we can conclude
\begin{align*}
\T_0 \left[\Z^T\pvec(G^i)\right]_0&=\Z^{-1}_0\rvec_0(G^i), 
\end{align*}
which leads after multiplication with $\T_0^{-1}$ to the desired result.
\end{proof}

\begin{Theorem}\label{thm:SymmetricSplittingFormula}
Let $(G^1,G^2,X)$ be a $K$-splitting of $(G,K)$. Then
\begin{align}
R(G,K)&=\rvec_0(G^1)^T\M_0^{-1}\rvec_0(G^2).
\end{align}
\end{Theorem}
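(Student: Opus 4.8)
The plan is to combine the two preceding lemmas by a direct substitution and then recognise the resulting product as the factored inverse of $\M_0$. Concretely, I would start from Lemma~\ref{lem:pSplittingReduced}, which already expresses $R(G,K)$ in the factored form
\begin{align*}
R(G,K)&=\left[\Z^T\pvec(G^1)\right]_0^T\T_0\left[\Z^T\pvec(G^2)\right]_0,
\end{align*}
and then eliminate the vectors $\left[\Z^T\pvec(G^i)\right]_0$ in favour of the reliability vectors $\rvec_0(G^i)$ using Lemma~\ref{lem:rpReduced}.

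For the right-hand factor $i=2$ this substitution is immediate, giving $\left[\Z^T\pvec(G^2)\right]_0=\T_0^{-1}\Z_0^{-1}\rvec_0(G^2)$. For the left-hand factor $i=1$ I would take the transpose of the identity in Lemma~\ref{lem:rpReduced}, obtaining $\left[\Z^T\pvec(G^1)\right]_0^T=\rvec_0(G^1)^T(\Z_0^{-1})^T(\T_0^{-1})^T$. Here two small algebraic observations do the work: first, $\T_0$ is a diagonal matrix, so $(\T_0^{-1})^T=\T_0^{-1}$; second, $(\Z_0^{-1})^T=(\Z_0^T)^{-1}$. Inserting both expressions back into Lemma~\ref{lem:pSplittingReduced} yields
\begin{align*}
R(G,K)&=\rvec_0(G^1)^T(\Z_0^T)^{-1}\T_0^{-1}\,\T_0\,\T_0^{-1}\Z_0^{-1}\rvec_0(G^2),
\end{align*}
and the central block telescopes via $\T_0^{-1}\T_0\T_0^{-1}=\T_0^{-1}$.

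It then remains to identify the matrix product $(\Z_0^T)^{-1}\T_0^{-1}\Z_0^{-1}$ with $\M_0^{-1}$. This is exactly the inverse of the factorisation $\M_0=\Z_0\T_0\Z_0^T$ supplied by Corollary~\ref{cor:factoringM}, since all three factors are invertible ($\T_0$ is diagonal with non-vanishing entries by construction of the index set $\Pi_l(X,\pi_X)_0$, and $\Z_0$ is the unitriangular Zeta-matrix of a subposet), whence $\M_0^{-1}=(\Z_0^T)^{-1}\T_0^{-1}\Z_0^{-1}$. Substituting this gives the claimed symmetric formula $R(G,K)=\rvec_0(G^1)^T\M_0^{-1}\rvec_0(G^2)$. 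I expect no genuine obstacle here; the proof is pure linear algebra once the two lemmas are in hand, and the only point demanding care is the bookkeeping of transposes and inverses—in particular invoking the diagonality of $\T_0$ so that its inverse is symmetric, and matching the order of factors so that the telescoping and the factorisation of $\M_0$ align correctly.
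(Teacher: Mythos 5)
Your proposal is correct and follows essentially the same route as the paper's own proof: substitute Lemma~\ref{lem:rpReduced} into Lemma~\ref{lem:pSplittingReduced}, telescope the diagonal factors, and identify $(\Z_0^T)^{-1}\T_0^{-1}\Z_0^{-1}$ with $\M_0^{-1}$ via the factorisation $\M_0=\Z_0\T_0\Z_0^T$ from Corollary~\ref{cor:factoringM}. If anything, your bookkeeping of transposes is slightly more careful than the paper's, whose middle display writes the factorisation as $[\Z_0^T\T_0\Z_0]^{-1}$ rather than $[\Z_0\T_0\Z_0^T]^{-1}$.
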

\begin{proof}
By Lemma~\ref{lem:pSplittingReduced} we have
\begin{align*}
R(G,K)&=\left[\Z^T\pvec(G^1)\right]_0^T\T_0\left[\Z^T\pvec(G^2)\right]_0,
\end{align*}
whereas Lemma~\ref{lem:rpReduced} gives 
\begin{align*}
\left[\Z^T\pvec(G^i)\right]_0&=\T_0^{-1}\Z_0^{-1}\rvec_0(G^i)
\end{align*}
for $i=1,2$. Substituting the second equation in the first one proves the claim
\begin{align*}
R(G,K)&=\rvec_0(G^1)^T\Z_0^{-1T}\T_0^{-1T} \T_0 \T_0^{-1}\Z_0^{-1}\rvec_0(G^2) \\
&=\rvec_0(G^1)^T[\Z_0^T\T_0\Z_0]^{-1}\rvec_0(G^2) \\
&=\rvec_0(G^1)^T\M_0^{-1}\rvec_0(G^2). \qedhere
\end{align*}
\end{proof}
       
\section{Conclusion}
The splitting formula stated by Theorem~\ref{thm:SymmetricSplittingFormula} shows that the $K$-terminal
reliability can be computed for graphs with small separating vertex sets. We showed that our approach is 
superior to former known methods by achieving a tremendous reduction of the necessary 
states by utilising the factoring of the transfer matrix. 

Even more we proved that the computational efficiency can be further improved by using separating vertex sets
containing terminal vertices. 

Observe that we could easily extend our approach to a recursive decomposition scheme by the
transfer-matrix method. This extension leads to a polynomial time algorithm for the computation of $R(G,K)$ for 
graphs with restricted treewidth.

Finally, the splitting approach has an amendable form, as it expresses the $K$-terminal reliability as
a sum of linear combinations of $K$-terminal reliabilities of the subgraphs of the splitting.
            
\section{Acknowledgements}
The author wants to thank his supervisor Peter Tittmann for useful hints during
his research.

This work is supported by the ESF grant {\tt 080942497} from the European Union.

\end{document}